\renewcommand{\H}{\mathbb{H}}
\newcommand{\R}{\mathbb{R}}
\newcommand{\C}{\mathbb{C}}
\newcommand{\D}{\mathbb{D}}
\newcommand{\isom}{\text{Isom}^+}
\newcommand{\SO}{\text{SO}(3)}
\newtheorem{theorem}{Theorem}
\newtheorem{question}[theorem]{Question}
\newtheorem{proposition}[theorem]{Proposition}
\newtheorem{lemma}[theorem]{Lemma}
\author{Ernesto García \and Pablo Lessa}
\title{On the discretness of states accessible via right-angled paths in hyperbolic space}
\DeclareMathOperator{\acosh}{acosh}
\DeclareMathOperator{\dist}{dist}
\let\sl=\newsl
\DeclareMathOperator{\psl}{PSL}
\DeclareMathOperator{\tr}{tr}
\DeclareMathOperator{\im}{Im}
\begin{document}
 \maketitle
 
 \begin{abstract}
  We consider the control problem where, given an orthonormal tangent frame in the hyperbolic plane or three dimensional hyperbolic space, one is allowed to transport the frame a fixed distance \(r > 0\) along the geodesic in direction of the first vector, or rotate it in place a right angle.  We characterize the values of \(r > 0\) for which the set of orthonormal frames accessible using these transformations is discrete.
  
  In the hyperbolic plane this is equivalent to solving the discreteness problem (see \cite{gilman2} and the references therein) for a particular one parameter family of two-generator subgroups of \(\text{PSL}_2(\R)\).   In the three dimensional case we solve this problem for a particular one parameter family of subgroups of the isometry group which have four generators.
 \end{abstract}

 \section{Introduction}
 
 Imagine a robot which can move forward a fixed distance and rotate in place a right angle.  Which states are accessible for such a system from a given initial position and orientation?
 
 It is clear that, if placed on the Euclidean plane, the robot is constrained to move on a square grid.  The attainable states (position and orientation) for the robot are the vertices of the grid and the four orientations parallel to the edges.
 
 However, if we imagine the robot constrained to move on the surface a sphere, it is simple to see that the accessible states may form either a finite or infinite set, 
 depending on the relationship between the distance the robot is allowed to advance and the diameter of the sphere.

 To formalize this consider the unit sphere \(S^2\) centered at the origin in \(\R^3\). Fixing any point \(p \in S^2\) and unit tangent vector \(v\) at \(p\) to represent the initial state of the robot,  we may identify the space of possible states with the group of rotations \(\SO\) via the mapping \(R \mapsto (Rp,Rv)\).  
 
 With this identification the set of accessible states corresponds to the subgroup \(G_r\) of \(\SO\) generated by the two elements \(R\) and \(A_r\), where \(R\) is the unique rotation fixing \(p\) and rotating \(v\) a clockwise right angle, and \(A_r\) is the unique rotation advancing \((p,v)\) a distance \(r\) along the geodesic (great circle) with initial speed \(v\).
 
 In particular, the set of accessible states is finite if and only if \(G_r\) is a finite subgroup of \(\SO\).    
 
 The finite subgroups of \(\SO\) are well known (see for example \cite[Chapter 19]{armstrong}) and their classification implies that \(G_r\) is discrete if and only if \(r = n\pi/2\) for some integer \(n\). 
 
 In what follows we answer the above question in the hyperbolic plane \(\H^2\), and also in three dimensional hyperbolic space \(\H^3\).

\subsection{Statements}

Fix an orientation on \(\H^2\) and let \(\isom(\H^2)\) be its group of orientation preserving isometries.  As in the spherical case we fix an initial point \(p \in \H^2\) and a unit tangent vector \(v \in T_p\H^2\) which represent the initial position and orientation of the robot.   The set of states is identified with the group \(\isom(\H^2)\) via the mapping \(g \mapsto (g(p), D_pg (v))\).

Let \(R\) be the clockwise rotation by a right angle fixing \(p\), and \(A_r\) be the translation of distance \(r\) along the positive direction of the geodesic with initial condition \(v\).
 
The set of accessible states is the orbit of \(v\) under the group \(G_r\) generated by \(R\) and \(A_r\).  For \(r > 0\) this set is always infinite, but we are interested in whether it is discrete or not.

We will use \(\acosh(x) = \log(x + \sqrt{x^2 -1})\) to denote the inverse hyperbolic cosine function.

Our main result is the following:

\begin{theorem}\label{maintheorem}
  Let \(r_5 < r_6 < \cdots\) be the sequence where \(r_n = \acosh(1 + 2\cos(\frac{2\pi}{n}))\) is the side of the (unique up to isometry) regular \(n\)-gon with interior right angles in \(\H^2\), let \(r_\infty = \lim\limits_{n \to +\infty} r_n = \acosh(3)\), and let \(G_r\) be the group generated by \(R\) and \(A_r\) (as defined above).

  Then \(G_r\) is discrete if and only if \(r \in \lbrace r_n: n \ge 5\rbrace \cup [r_\infty, +\infty)\).
  For all other values of \(r\) the group \(G_r\) is dense in \(\isom(\H^2)\).
\end{theorem}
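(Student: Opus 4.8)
The plan is to recognize \(G_r\) as a one‑parameter deformation of the \((2,4,\infty)\) triangle group and to read off discreteness from the classical theory of hyperbolic triangle groups. First I would pass to matrices, identifying \(\isom(\H^2)\) with \(\psl_2(\R)\), placing \(p=i\) with \(v\) pointing up the imaginary axis, so that
\[
  A_r = \begin{pmatrix} e^{r/2} & 0 \\ 0 & e^{-r/2} \end{pmatrix}, \qquad
  R = \frac{1}{\sqrt{2}}\begin{pmatrix} 1 & 1 \\ -1 & 1 \end{pmatrix}.
\]
Setting \(S = A_r R\), two trace computations drive everything: \(\tr(S) = \sqrt{2}\,\cosh(r/2)\) and \(\tr(RS) = \tr(R A_r R) = 0\). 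The second identity says that \(RS\) is always an involution, so writing \(m = RS\) we have \(R^4 = m^2 = 1\) and \(G_r = \langle R, A_r\rangle = \langle R, m\rangle\) is a quotient of the abstract group \(\Delta = \langle R, m \mid R^4 = m^2 = 1\rangle \cong \mathbb{Z}/4 \ast \mathbb{Z}/2\), the \((2,4,\infty)\) von Dyck group, with \(S = R^{-1}m\) the image of its third standard generator. Using \(\cosh^2(r/2) = \tfrac12(\cosh r + 1)\) and the identity \(\cosh(r_n/2) = \sqrt{2}\cos(\pi/n)\) (which is equivalent to the definition of \(r_n\)), the first computation shows that \(S\) is elliptic with rotation angle \(\theta(r)\) decreasing from \(\pi/2\) to \(0\) as \(r\) runs over \((0,r_\infty)\), is parabolic at \(r = r_\infty = \acosh 3\), and is hyperbolic for \(r > r_\infty\); moreover \(\theta(r) = 2\pi/n\) precisely when \(r = r_n\).

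Next I would set up the geometric model: \(G_r\) is generated by the rotations at the three vertices of a hyperbolic triangle \(T_r\), namely \(R\), \(m\) and \(S\), with respective angles \(\pi/4\), \(\pi/2\) and \(\theta(r)/2\). The last vertex is an ordinary cone point when \(S\) is elliptic, an ideal vertex when \(S\) is parabolic, and a hyperideal vertex—replaced by the axis of \(S\), the common perpendicular of the two adjacent sides—when \(S\) is hyperbolic. Since a hyperbolic triangle is rigid, the two constant angles \(\pi/4,\pi/2\) together with the side joining the fixed points of \(R\) and \(m\) (a quantity determined by the matrices) already fix \(T_r\), and the angle \(\theta(r)/2\) computed above at the third vertex is automatically the consistent one; thus as \(r\) varies the groups \(G_r\) sweep out exactly the one‑parameter family of \((2,4,\ast)\) von Dyck groups with a single variable angle.

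With this model in hand the theorem reduces to the classical trichotomy for triangle groups: I would realize \(T_r\) as a fundamental domain for the group \(\Gamma_r\) generated by the reflections in its three sides via Poincaré's polygon theorem, and then use that \(G_r\) is the orientation‑preserving index‑two subgroup of \(\Gamma_r\), so that \(G_r\) is discrete if and only if \(\Gamma_r\) is. When \(r = r_n\) with \(n \ge 5\) the angles \(\pi/4,\pi/2,\pi/n\) satisfy \(\tfrac14 + \tfrac12 + \tfrac1n < 1\), so \(\Gamma_{r_n}\) is the discrete \((2,4,n)\) reflection group and \(G_{r_n}\) its von Dyck subgroup; at \(r = r_\infty\) the third vertex becomes ideal and one obtains the discrete \((2,4,\infty)\) group. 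For \(r > r_\infty\) I would truncate the hyperideal vertex of \(T_r\) along the axis of \(S\) and check Poincaré's side‑pairing and angle‑sum conditions for the resulting right‑angled polygon, which hold for every such \(r\); this yields discreteness on the entire half‑line \([r_\infty,+\infty)\). Conversely, for \(r \in (0,r_\infty)\setminus\{r_n\}\) the angle \(\theta(r)/2\) is not of the form \(\pi/n\): if \(\theta(r)/\pi \notin \mathbb{Q}\) then \(S\) is elliptic of infinite order and \(\langle S\rangle \subseteq G_r\) is already non‑discrete, while if \(\theta(r)/2 = \pi p/q\) with \(p \ge 2\) then \(\Gamma_r\) is non‑discrete by the classical converse of Poincaré's theorem and hence so is \(G_r\). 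Finally, \(G_r\) contains the hyperbolic \(A_r\) and the elliptic \(R\) with disjoint fixed‑point sets, so it is non‑elementary; a non‑elementary non‑discrete subgroup of \(\psl_2(\R)\) is dense, which gives the density claimed for all remaining \(r\).

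I expect the genuine obstacle to be the hyperideal range \(r > r_\infty\): checking that the truncated‑triangle polygon meets every hypothesis of Poincaré's theorem uniformly along the whole half‑line—with the degeneration at \(r = r_\infty\) matching the parabolic threshold exactly—is the step that really uses the hyperbolic geometry, whereas the discreteness at \(r = r_n\) and the non‑discreteness for the bad angles can be imported almost directly from the classical theory of triangle groups.
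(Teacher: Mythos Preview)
Your approach is genuinely different from the paper's and is largely sound. The key insight—that \(m = RA_rR\) has trace zero, so \(G_r = \langle R,m\rangle\) is a one–parameter family of \((2,4,\ast)\) von Dyck groups—unifies the argument nicely. The paper never passes to the reflection group \(\Gamma_r\); instead it applies Poincar\'e's polygon theorem directly to \(G_r\) with a \((\pi/4,\pi/4,2\pi/n)\) triangle (twice the area of your \((\pi/2,\pi/4,\pi/n)\) triangle), and for \(r\ge r_\infty\) it uses a ping–pong argument on the index-\(4\) subgroup \(H_r=\langle A_r, RA_rR^{-1}\rangle\) rather than a truncated polygon. Your route is more conceptual and makes the connection to the classical Schwarz triangle groups explicit; the paper's route is more self-contained and yields the extra structural information (free subgroup, invariant tree) in the Schottky range.

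There is, however, a real gap in your treatment of the rational non-primitive case \(\theta(r)/2=\pi p/q\) with \(p\ge 2\). You dispose of it by invoking ``the classical converse of Poincar\'e's theorem,'' but no such converse exists in the form you need: failure of the cycle condition tells you the given polygon is not a fundamental domain, not that the generated group is non-discrete. What you actually need is the statement that the reflection group of a hyperbolic triangle is discrete only if every angle is a submultiple of \(\pi\). This is true, but it requires an argument—e.g.\ via the theory of discrete reflection groups (the fundamental chamber has all angles of the form \(\pi/n_i\), and one must rule out that the given triangle properly contains such a chamber), or via Knapp's classification of two-generator elliptic Fuchsian groups. The paper does not take this for granted: it devotes an entire section to proving non-discreteness here by an explicit J\o rgensen inequality computation applied to a suitable power of \(S\) and \(R^2\). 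If you want to keep your triangle-group framing, you should either supply a precise reference for the discreteness criterion for triangle reflection groups, or give the short J\o rgensen (or Knapp-type) argument yourself; as written, this step is the one place where your proposal is a sketch rather than a proof.
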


Using the Poincaré Polygon Theorem we will show in section \ref{tilingssection} that for \(n \ge 5\) the group \(G_{r_n}\) acts with a fundamental domain given by a triangle with angles \(\pi/4,\pi/4,2\pi/n\).

In section \ref{sectiontrees} we will show that when \(r \ge r_\infty\) the group \(G_{r}\) preserves a embedded tree of degree 4, which in particular shows that \(G_{r}\) is discrete.

In sections \ref{sectionirrational} and \ref{sectionnonprimitive} we show that in the remaining cases \(G_r\) is not discrete, using Jørgensen's inequality.  This implies that \(G_r\) is dense in these cases by a well known dichotomy (see Proposition \ref{dichotomy}, and \cite[Section 1]{sullivan}).

We will extend theorem \ref{maintheorem} to three dimensional hyperbolic space \(\H^3\) as follows.

Let \(\isom(\H^3)\) be the group of orientation preserving isometries of \(\H^3\).   Fix a point \(p \in \H^3\) and an orthogonal tangent frame \(v_1,v_2,v_3\) based at \(p\).
Suppose \(A_r\) is the isometry which transports the given frame a distance \(r\) along the geodesic 
with initial condition \((p,v_1)\) while \(R_{12},R_{23}, R_{31}\) are \(90º\) rotations fixing \(p\) in the direction of the planes generated by \((v_1,v_2),(v_2,v_3)\), and \((v_3,v_1)\) respectively.

\begin{theorem}\label{dimensionthreetheorem}
 The subgroup \(G_r\) of \(\isom(\H^3)\) generated by \(A_r,R_{12},R_{23},R_{31}\) is discrete if and only if \(r \in \lbrace r_n: n \ge 5\rbrace \cup [r_\infty, +\infty)\) where \(r_n\) are defined as in Theorem \ref{maintheorem}.   For all other values of \(r\) the group is dense in \(\isom(\H^3)\).

 Furthermore, \(G_{r_5}\) is cocompact, \(G_{r_6}\) is not cocompact but has finite covolume, and \(G_{r_n}\) has infinite covolume for all \(n \ge 7\).
 \end{theorem}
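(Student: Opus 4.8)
The plan is to reduce the ``only if'' direction and the density dichotomy to the planar Theorem \ref{maintheorem}, and to reduce the ``if'' direction together with the covolume statements to the geometry of the regular honeycombs \(\{n,3,4\}\) of \(\H^3\). First I would fix the totally geodesic plane \(P \subset \H^3\) through \(p\) spanned by \(v_1,v_2\) and observe that both \(A_r\) (whose axis lies in \(P\)) and \(R_{12}\) (the rotation about the axis \(v_3\) normal to \(P\)) preserve \(P\) and restrict there to the generators \(A_r\) and \(R\) of Theorem \ref{maintheorem}. The stabilizer of \(P\) in \(\isom(\H^3)\) is a closed subgroup, and the pointwise stabilizer of \(P\) among orientation-preserving isometries is trivial, so restriction to \(P\) is a topological isomorphism of this stabilizer onto a copy of the full isometry group of \(\H^2\). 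Hence \(\langle A_r,R_{12}\rangle\) restricts to the planar group \(G_r^{(2)}\) discretely whenever \(G_r\) is discrete, and Theorem \ref{maintheorem} forces \(r \in \lbrace r_n : n\ge 5\rbrace \cup [r_\infty,+\infty)\). This gives the ``only if'' direction for free.

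For the density alternative I would prove the three-dimensional analogue of Proposition \ref{dichotomy}: identifying \(\isom(\H^3)\) with \(\psl_2(\C)\), a non-discrete subgroup with Zariski-dense closure must equal \(\psl_2(\C)\), since the identity component of its closure is normalized by a Zariski-dense set and therefore has Lie algebra an ideal of the Lie algebra of \(\psl_2(\C)\), which is simple as a real Lie algebra. Zariski density of \(G_r\) is immediate, because the octahedral group \(\langle R_{12},R_{23},R_{31}\rangle\) permutes the three coordinate axes and planes, so \(G_r\) fixes no point, geodesic, or totally geodesic plane of \(\overline{\H^3}\).

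The substance is the ``if'' direction. For \(r \ge r_\infty\) I would upgrade Section \ref{sectiontrees}: the \(G_r\)-orbit of \(p\), with edges joining points at distance \(r\) in the six octahedral directions \(\pm v_1,\pm v_2,\pm v_3\), spans an embedded tree of degree \(6\) on which \(G_r\) acts with vertex stabilizers equal to the octahedral group. No cycle can form because every pair of axis directions spans a coordinate plane, on which the orbit graph is already a degree-\(4\) tree by the planar case; properness of the resulting action yields discreteness. For \(r = r_n\) with \(5 \le n < \infty\) I would show this same six-valent orbit graph is the \(1\)-skeleton of the regular honeycomb \(\{n,3,4\}\): the coordinate planes carry the \(\{n,4\}\)-tilings produced by Theorem \ref{maintheorem}, the vertex figure is the octahedron \(\{3,4\}\), and the edge length is forced to equal \(r_n\). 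Writing \(s_0,s_1,s_2,s_3\) for the reflections in a fundamental Coxeter simplex, one checks that \(A_{r_n}=s_0 s_i\), the product of the vertex mirror perpendicular to \(v_1\) and the edge-bisecting mirror, whose separation is \(r_n/2\), while the \(R_{ij}\) generate the rotation subgroup of the vertex stabilizer \(\langle s_1,s_2,s_3\rangle\); together these generate the full orientation-preserving Coxeter group \(W^+\) of \(\{n,3,4\}\). Discreteness is then the discreteness of a Coxeter group, via the Poincaré Polyhedron Theorem applied to the fundamental simplex, truncated at its hyperideal vertices when \(n\ge 7\).

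The covolume statements follow from the cell type \(\{n,3\}\) once \(G_{r_n}=W^+\) is established: the cell is the spherical dodecahedron for \(n=5\), so \(\{5,3,4\}\) is the compact order-\(4\) dodecahedral honeycomb and \(G_{r_5}\) is cocompact; it is the Euclidean hexagonal tiling for \(n=6\), so \(\{6,3,4\}\) is paracompact with ideal cells, giving finite covolume with cusps but no cocompactness; and it is a hyperbolic tiling for \(n\ge 7\), forcing hyperideal vertices, truncation, and infinite covolume. I expect the main obstacle to be precisely this identification: rigorously matching the abstract orbit graph of \(G_{r_n}\) with the honeycomb \(\{n,3,4\}\), which amounts to the hyperbolic-trigonometric verification that six mutually right-angled geodesics of length \(r_n\) through \(p\) close up into the regular honeycomb, together with the Poincaré cycle-condition check that legitimizes the (possibly truncated) fundamental domain and hence the exact covolume trichotomy.
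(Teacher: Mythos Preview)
Your overall strategy is sound and largely parallel to the paper's, but the core of the ``if'' direction for \(r=r_n\) is organized differently. The paper does \emph{not} work with the Coxeter simplex of \(\{n,3,4\}\); instead it builds the cell polyhedron \(C\) directly---the right-angled dodecahedron via Andreev's theorem for \(n=5\), and an explicit infinite-volume polyhedron with regular right-angled \(n\)-gon faces for \(n\ge 6\) (a horoball-based construction for \(n=6\), a construction over the \(\{n,3\}\) tiling of a totally geodesic \(\H^2\) for \(n\ge 7\)). It then applies the Poincar\'e theorem for reflection groups to the face-reflection group \(S\) of \(C\), observes that the stabilizer \(G\) of \(C\) is discrete, checks that \(SG\) is a group and is discrete, and finally verifies \(A_{r_n},R_{12},R_{23},R_{31}\in SG\). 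Covolume is read off from the geometry of \(C\): compact for \(n=5\); a single ideal accumulation point for \(n=6\), so a finite-volume cusp; and containing a half-space for \(n\ge 7\), so infinite covolume even after dividing by the stabilizer. Your route via the simplex and the identification \(G_{r_n}=W^+\) is cleaner conceptually and yields the exact covolume, but it requires you to prove that equality (not just \(G_{r_n}\subset W^+\)): the easiest way is to note that \(G_{r_n}\) already contains the full rotational vertex stabilizer and acts transitively on honeycomb vertices by connectedness of the \(1\)-skeleton, hence equals \(W^+\). The paper's route avoids this step at the cost of the explicit polyhedron construction.

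Two small gaps to flag. First, your tree argument for \(r\ge r_\infty\) does not reduce to the planar case: a putative cycle in the six-valent graph need not lie in a single coordinate plane, so ``every pair of axis directions spans a coordinate plane'' does not by itself rule out a cycle that uses all three directions. You need the genuine three-dimensional ping-pong with six half-spaces, exactly as the paper carries out. Second, your formula \(A_{r_n}=s_0 s_i\) with \(s_i\) a simplex wall is not correct: among the walls \(s_1,s_2,s_3\) through \(p\), two contain the edge direction \(v_1\) and the third meets it at an oblique angle, so none is perpendicular to \(v_1\). What works is \(A_{r_n}=s_0\sigma\) where \(\sigma\in\langle s_1,s_2,s_3\rangle\) is the reflection in the plane through \(p\) perpendicular to \(v_1\) (one of the nine mirrors of the full octahedral vertex stabilizer, but not a Coxeter generator); then both \(s_0\) and \(\sigma\) are perpendicular to the edge, at separation \(r_n/2\), and their product is the pure translation. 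This is exactly the obstacle you anticipated, and with this correction your identification \(G_{r_n}=W^+\) goes through.
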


The proof of theorem \ref{dimensionthreetheorem} rests on theorem \ref{maintheorem}, Andreev's theorem (see \cite{roeder-hubbard-dunbar}), and the Poincaré Polyhedron Theorem for reflexion groups (see \cite{delaharpe}).  It is given in section \ref{threedimensionalsection}.

\subsection{Relationship to the existing literature}

The discreteness problem is the problem of determining whether a finitely generated group of isometries of hyperbolic space is discrete (see \cite{gilman2} and the references therein).  
Both of our results are solutions to this problem for particular one parameter families of subgroups of isometries in \(\H^2\) and \(\H^3\) respectively.

The Gilman-Maskit algorithm (see \cite{gilman} and \cite{gilman-maskit}) gives a finite sequence of steps to determine whether a two generator subgroup of \(\isom(\H^2)\) is discrete or not.  
Theorem \ref{maintheorem} gives additional information on the structure of the set of parameters for which the algorithm will yield one result or the other.  An illustration of the arguments of \cite{gilman} applied to one parameter considered here is given in section \ref{sectiongilman}.

It was shown in \cite{kapovich} that no real number algorithm exists for determining whether a finitely generated group of isometries of \(\H^3\) is discrete or not.

The results of \cite{gruet} and \cite{cohen-colonna} are related to a generalization of the family of groups \(G_r\) where the rotation \(R\) has order \(2N\) for some \(N \ge 2\) (instead of \(4\) as in theorem \ref{maintheorem}).  For further discussion see section \ref{sectiongruet}.

In the context of theorem \ref{maintheorem}, the Poincaré polygon theorem (see for example \cite{derham} or \cite{maskit}) directly implies that \(G_{r_n}\) is discrete for \(n = 5,6,\ldots\) and preserves a tiling by regular polygons.  

The Poincaré theorem also holds in higher dimensional hyperbolic space (see \cite{epstein}) and is sufficient to establish that \(G_{r_5}\) is discrete and cocompact in the context of theorem \ref{dimensionthreetheorem}.   However a simple argument comunicated to us by Roland Roeder, which we give in section \ref{threedimensionalsection} shows that the corresponding compact polyhedra for \(r_6,r_7,\ldots\) do not exist in \(\H^3\).  
We will show, however, that an infinite volume polyhedra corresponding to each \(r_n\) for \(n=6,7,\ldots\) does exist, and apply the Poincaré polyhedron theorem (specifically the version for reflexion groups given here \cite{delaharpe}) to it to obtain theorem \ref{dimensionthreetheorem}.

In the context of both theorem \ref{maintheorem} and theorem \ref{dimensionthreetheorem}, when \(r > r_\infty\) the group \(G_r\) contains a finite index Schottky group and therefore its behavior is well understood in the literature.  For example, in the two dimensional case, the results of \cite{cohen-colonna} imply that \(G_r\) leaves invariant a regular tree of degree \(N\) which is bi-Lipschitz embedded in the hyperbolic plane.
This case is also covered by results on finite valued matrix cocycles in \cite{avila-bochi-yoccoz}.
At the critical distance \(r_\infty\) there is still an embedded tree preserved by \(G_{r_\infty}\) but the embedding is no longer bi-Lipschitz.

A well known argument (see for example \cite[Part 1]{sullivan}) implies that in both the two and three dimensional cases, for each \(r > 0\) either \(G_r\) is discrete or dense in the corresponding group \(\isom(\H^2)\) or \(\isom(\H^3)\).  We give the details of this argument for \(\H^2\) in Proposition \ref{dichotomy}.   This implies the dichotomy for \(\H^3\), as we explain at the beginning of section \ref{threedimensionalsection}.

The Margulis lemma (see for example \cite[Theorem 9.5]{ballmann-gromov-schroeder}) implies that \(G_r\) is dense for all \(r\) small enough.

Results of Benoist and Quint imply a discretness vs denseness dichotomy for \(G_r\) when acting on any finite area quotient of \(\H^2\) as discussed in \cite{ledrappier}.

Software implementations of a `robot' (usually called a `turtle' in this context) receiving commands to move forward or turn in place by given amounts date back to the LOGO programming language \cite{abelson-disessa}.
Some implementation details and exploration of the hyperbolic case is given in \cite{simscoomber-martin-thorne}.   A rudimentary but functional software implementation of a hyperbolic turtle has been made available by one of the authors \cite{lessa}.  Several of the figures in this article were prepared with the software available here \cite{lessa2}.

\section{Preliminaries}

We now recall some basic facts on hyperbolic geometry which will be used in what follows, see \cite{beardon} for a general reference on this subject.

The hyperbolic plane \(\H^2\) is the unique, up to isometry, complete simply connected surface with curvature \(-1\).   Concrete manifolds with explicit metrics satisfying these properties are called models of the hyperbolic plane.

The upper half-plane model is the space \(\lbrace z \in \C: \im(z) > 0\rbrace\) endowed with the Riemannian metric \(\frac{1}{y^2}(dx^2 + dy^2)\).   The orientation preserving isometries in this model are the Möebius transformations of the form
\[z \mapsto \frac{az + b}{cz + d}\]
where \(a,b,c,d \in \R\) and \(ad - bc = 1\).

The disk model is the space \(\D = \lbrace z \in \C: |z| < 1\rbrace\) with the metric \(\left(\frac{2}{1-(x^2+y^2)}\right)^2(dx^2+ dy^2)\).    Figures \ref{figuretiling}, \ref{figuretree}, \ref{figureirrational}, \ref{figurenonprimitive}, and \ref{figuregilman} below illustrate the disk model.  An isometry between the upper half-plane and disk model is  \(z \mapsto \frac{z-i}{z+i}\).

In both of these models the hyperbolic geodesics are Euclidean straight lines or circles which are perpendicular to the boundary.   In particular there is a unique, globally minimizing, geodesic between any pair of points in \(\H\).

An orientation preserving isometry of \(\H^2\) is called elliptic, parabolic, or hyperbolic, acording to whether it fixes a single interior point, a single boundary point, or two boundary points, respectively, in the disk model.

Elliptic isometries are also called rotations, they act as rotations in the tangent space of their fixed point in \(\H^2\).   An element
\[\pm \begin{pmatrix}\cos(\theta) & -\sin(\theta)\\ \sin(\theta) & \cos(\theta)\end{pmatrix} \in \psl_2(\R)\]
acts as a rotation of angle \(-2\theta\) (i.e. a clockwise rotation) in the half-plane model.

Hyperbolic isometries are also called translations, they fix a unique geodesic in \(\H^2\) and act as a translation of a certain distance when restricted to this geodesic.  An element
\[\pm \begin{pmatrix}e^{t} & 0\\ 0 & e^{-t}\end{pmatrix} \in \psl_2(\R)\]
acts as a translation of distance \(2t\) in the upper half-plane model.

From the Gauss-Bonnet theorem and explicit construction in one of the models shows that there exists a geodesic triangle in \(\H^2\) with interior angles \(\alpha,\beta,\gamma\) in \(\H^2\) if and only if \(\alpha+\beta+\gamma < \pi\), and in this case the triangle is unique up to isometries.

In the hyperbolic plane given the length of two sides of a triangle and the angle between them the length of the third side is determined by the hyperbolic law of cosines
\[\cosh(c) = \cosh(a)\cosh(b) - \sinh(a)\sinh(b)\cos(\gamma)\]
where \(a,b,c\) are the lengths of the sides opposite to angles \(\alpha,\beta,\gamma\) respectively.

As in spherical geometry, in \(\H^2\) two angles of a triangle and the length of the side between determine the third angle (in Euclidean geometry the length of the side plays no role in this relation).  In the hyperbolic case the relation is given by the second hyperbolic law of cosines which states
\[\cos(\gamma) = -\cos(\alpha)\cos(\beta) + \sin(\alpha)\sin(\beta)\cosh(c).\]

Let \(\isom(\H^2)\) be the group of orientation preserving isometries of \(\H^2\) endowed with the topology of pointwise convergence (which in this case is equivalent to locally uniform convergence because all functions are uniformly Lipschitz).   The upper half-plane model shows that \(\isom(\H^2)\) is homeomorphic to \(\psl_2(\R)\) with the topology of pointwise convergence coming from \(\sl_2(\R)\).

A Fuchsian group is a discrete subgroup of \(\isom(\H^2)\) (i.e. a subgroup which is discrete as a subset with respect to the given topology).

Given a subgroup \(G\) of \(\isom(\H^2)\) if the orbit \(Gp\) is not discrete for some \(p \in \H^2\) then \(G\) is not discrete.   On the other hand if \(G\) has a finite index subgroup \(H\) which is discrete it follows that \(G\) is discrete.

From the map \(z \mapsto \overline{z}\) in the disk model, one obtains that given a geodesic in \(\H^2\) there is a unique orientation reversing isometry that acts as the identity on the geodesic.   We call this the axial symmetry with respect to the geodesic.

If \(\sigma_1,\sigma_2\) are axial symmetries along two geodesics then their composition \(\sigma_1\sigma_2\) yields, a rotation of angle \(2\theta\) if the geodesics meet at an angle \(\theta\), a parabolic isometry if the geodesics do not intersect but the distance between them is zero, and a translation of distance \(2t\) if the geodesics are at a positive distance \(t\).

\section{Proof of theorem \ref{maintheorem}}

We fix in this section the notation introduced preceeding the statement of theorem \ref{maintheorem}.  In particular, recall that we have fixed a point \(p \in \H^2\) and a unit tangent vector \(v\) based at \(p\).   We let \(A_r\) be the translation of distance \(r\) in direction \(v\) along the geodesic with initial condition \(v\) and \(R\) be the clockwise rotation by \(90º\) fixing \(p\).    The group \(G_r\) is generated by \(A_r\) and \(R\).

\subsection{Tilings\label{tilingssection}}

We will now discuss the values of \(r\) for which \(G_r\) is discrete and preserves a tiling by regular polygons.  Our result will be a consequence of the Poincaré Polygon Theorem (see \cite{derham} or \cite{maskit}) which we now state in a version sufficient for our purpose:
\begin{theorem}[Poincaré Polygon Theorem]\label{poincare}
Suppose that \(P\) is a compact polygon in \(\H^2\) with an even number \(2N\) of sides which are oriented so that each vertex of \(P\) is the endpoint and starting point of some edge.

Divide the edges of \(P\) into \(N\) pairs \((s_1,t_1), \ldots, (s_N,t_N)\).  Suppose that for each pair of sides \((s_i,t_i)\) an orientation preserving isometry \(\sigma_i\) is given such that the interior of \(\sigma_i(P)\) is disjoint from \(P\) and such that \(\sigma_i(s_i) = t_i\).

If a vertex \(p\) is the starting point of an edge \(s_i\) we define \(\sigma_i(p)\) as its successor, if on the other hand \(p\) is the starting point of an edge \(t_i\) we define \(\sigma_i^{-1}(p)\) as its successor.  An elliptic cycle is the complete orbit of a vertex under the successor mapping.

If the sum of interior angles among the vertices of each elliptic cycle is \(2\pi/k\) for some natural number \(k\) (depending on the cycle) then the group generated by \(\sigma_1,\ldots,\sigma_N\) is discrete, the translates of \(P\) under this group cover \(H\), and no two translates of \(P\) by distinct elements of the group intersect at an interior point.
\end{theorem}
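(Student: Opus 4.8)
The plan is to manufacture a model tiling by formal gluing and then transport it to \(\H^2\) through a developing map, following the covering-space proof of Poincaré's theorem. I would let \(\Gamma = \langle \sigma_1,\dots,\sigma_N\rangle \le \isom(\H^2)\) and form the quotient space
\[\hat{M} = (\Gamma \times P)/\!\sim,\]
where \((g,x) \sim (g\sigma_i, \sigma_i^{-1}(x))\) whenever \(x \in t_i\) (so that \(\sigma_i^{-1}(x) \in s_i\)). The copies \(\{g\}\times P\) are then glued along edges exactly as the side pairings prescribe, and since \(\sigma_1,\dots,\sigma_N\) generate \(\Gamma\) the associated ``shares an edge'' graph is the Cayley graph of \(\Gamma\), so \(\hat M\) is connected. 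The map \(\pi\colon \hat M \to \H^2\), \(\pi([(g,x)]) = g(x)\), is well defined because \(g(x) = (g\sigma_i)(\sigma_i^{-1}(x))\), and it restricts to an isometry on each copy of \(P\); the goal is to prove that \(\pi\) is a homeomorphism.

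The core is to equip \(\hat M\) with a hyperbolic structure making \(\pi\) a local isometry, which amounts to checking the local picture at three types of points. Interior points of a copy of \(P\) are immediate, and an interior point of an edge is covered by exactly two copies which fit into a single disk because \(\sigma_i\) maps \(s_i\) isometrically onto \(t_i\). The delicate case is a vertex: here I would follow the elliptic cycle through the vertex, laying down the successive copies of \(P\) dictated by the successor map. The side-pairing composition closing one trip around the cycle fixes the relevant vertex and, by the hypothesis that the interior angles along the cycle sum to \(2\pi/k\), acts as a rotation by \(2\pi/k\); hence it has order \(k\) in \(\Gamma\). Repeating the trip \(k\) times therefore lays down a finite packet of copies whose interior angles total \(k\cdot(2\pi/k) = 2\pi\) and which, using the hypothesis that each \(\sigma_i(P)\) has interior disjoint from \(P\), fit around the vertex with disjoint interiors and no gaps. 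This yields a genuine disk chart, so \(\hat M\) is a Hausdorff surface locally isometric to \(\H^2\).

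With the local structure in hand, compactness of \(P\) gives a uniform lower bound on the injectivity radius of \(\hat M\), making it a complete connected Riemannian surface and \(\pi\) a local isometry between complete surfaces of the same dimension; such a map is a covering map, the completeness supplying the required path lifting. Since \(\H^2\) is simply connected and \(\hat M\) is connected, \(\pi\) is a homeomorphism, in fact an isometry. Transporting the tautological tiling of \(\hat M\) by the copies \(\{g\}\times P\) through \(\pi\) then shows that the translates \(gP\) cover \(\H^2\); moreover injectivity of \(\pi\) forces \(g(\mathrm{int}(P)) \cap h(\mathrm{int}(P)) = \emptyset\) for \(g \ne h\), which gives the disjoint-interior conclusion. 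Discreteness of \(\Gamma\) follows at once: if \(g_n \to \mathrm{id}\) with \(g_n \ne \mathrm{id}\), then \(g_n P\) would eventually meet \(\mathrm{int}(P)\), contradicting the disjointness just established.

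I expect the main obstacle to be the vertex analysis together with the verification that \(\pi\) is a covering map. Making the local packet of copies around each vertex genuinely tile a neighborhood, with no overlaps, no gaps, and total angle exactly \(2\pi\), is precisely where the elliptic-cycle angle condition is indispensable and where careful bookkeeping of the successor map and the order-\(k\) rotation is needed; and upgrading the local isometry \(\pi\) to a covering map requires the completeness argument, for which the compactness of \(P\) is essential.
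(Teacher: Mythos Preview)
The paper does not prove this theorem: it states it as background and refers to \cite{derham} and \cite{maskit} for a proof. There is therefore no in-paper argument to compare your proposal against.

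That said, your sketch is the standard covering-space/developing-map proof of Poincar\'e's theorem, essentially as in Maskit and de~Rham, and the outline is sound. One point deserves more care. You label tiles by elements of the actual subgroup \(\Gamma \le \isom(\H^2)\) rather than by the abstract group presented on \(\sigma_1,\dots,\sigma_N\) subject only to the cycle relations. With \(\Gamma\)-labels you must verify that the \(km\) copies visited in \(k\) trips around a cycle of length \(m\) are indexed by \emph{distinct} elements of \(\Gamma\); otherwise two of the copies \(\{g\}\times P\) collapse in \(\hat M\) and the vertex fails to have a disk neighborhood. Knowing that the cycle transformation has order exactly \(k\) is not by itself enough: that controls the endpoints of the walk, not the intermediate labels. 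The hypothesis that \(\sigma_i(P)\) and \(P\) have disjoint interiors only separates \emph{adjacent} wedges. The rescue is the local geometric picture in \(\H^2\): laying the wedges down around the image vertex, their angles sum to \(2\pi\) and successive wedges lie on opposite sides of a common edge, so in a small enough ball they are genuinely distinct sectors; hence their labels in \(\Gamma\) are distinct. Many expositions avoid this wrinkle by labeling with the abstract group first and deducing that the natural map to \(\Gamma\) is an isomorphism only after \(\pi\) is shown to be a bijection, which also yields a presentation of \(\Gamma\) as a byproduct. Either route works; just be explicit about which you are taking.
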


The conclusions of the above theorem can be restated by saying that \(P\) is a fundamental domain for the group generated by \(\sigma_1,\ldots, \sigma_N\).

A geodesic triangle with interior angles \(\pi/4, \pi/4\) and \(2\pi/n\) (where \(n\) is a natural number) exists in \(\H^2\) if and only if
\[\frac{\pi}{4} + \frac{\pi}{4} + \frac{2\pi}{n} < \pi\]
which implies \(n \ge 5\).

We consider such a triangle \(T_n\) with a vertex at \(p\) and edges in directions \(v\) and \(w\) forming a clockwise angle of \(\pi/4\), and such that the edge in direction \(v\) is opposite to the interior angle \(2\pi/n\).

By the second hyperbolic law of cosines, the length of the side with direction \(v\) has length
\[r_n = \acosh\left(1+ 2\cos\left(\frac{2\pi}{n}\right)\right).\]

Notice that the sequence \(r_n\) is increasing, we define 
\[r_\infty = \lim\limits_{n \to +\infty} r_n = \acosh(3).\]

\begin{proposition}\label{polygons}
For each \(n = 5,6,\ldots\) the group \(G_{r_n}\) is discrete and \(T_n\) is a fundamental domain.
\end{proposition}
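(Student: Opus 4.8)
The plan is to realize $T_n$ as a fundamental domain by applying the Poincaré Polygon Theorem, after exhibiting the side-pairings explicitly and then checking that the group they generate is exactly $G_{r_n}$. Write $p,q,o$ for the vertices of $T_n$, where $p$ and $q$ are the endpoints of the side in direction $v$ (each carrying the interior angle $\pi/4$) and $o$ is the remaining vertex (carrying the interior angle $2\pi/n$); thus $A_{r_n}(p)=q$. As noted after the law of cosines computation, $T_n$ is congruent to the triangle cut out by the centre and two adjacent vertices of the regular right-angled $n$-gon, so $o$ is its centre and $op,oq$ are radii. Let $m$ be the midpoint of the side $pq$, let $\mu$ be the rotation by $\pi$ about $m$, and let $\rho$ be the rotation by $2\pi/n$ about $o$.

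Since Theorem \ref{poincare} requires an even number of sides, I would subdivide the side $pq$ at $m$, viewing $T_n$ as a quadrilateral with sides $pm,mq,qo,op$. I pair $pm$ with $mq$ by $\mu$ and $op$ with $oq$ by $\rho$; the relations $\mu(pm)=mq$ and $\rho(op)=oq$, and the fact that $\mu(T_n)$ and $\rho(T_n)$ each meet $T_n$ only along the shared side, are immediate from the geometry of the regular polygon ($\mu(T_n)$ is the adjacent triangle across $pq$ in the neighbouring polygon, and $\rho(T_n)$ is the next triangle around $o$ in the same polygon). The successor map then produces three elliptic cycles: the fixed point $o$, with total angle $2\pi/n$; the fixed point $m$, with total angle $\pi=2\pi/2$ (the subdivision vertex sits on a straight angle); and the pair $\{p,q\}$, with total angle $\pi/4+\pi/4=2\pi/4$. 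Each sum has the form $2\pi/k$, so Theorem \ref{poincare} applies and shows that $\langle \rho,\mu\rangle$ is discrete with fundamental domain $T_n$.

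It remains to identify $\langle\rho,\mu\rangle$ with $G_{r_n}$, which is the main point. I would decompose each isometry into axial symmetries: let $s_\ell$ be the symmetry in the geodesic through $p$ and $q$, $s_1$ the symmetry in the perpendicular to $pq$ at $p$, $s_m$ the symmetry in the perpendicular to $pq$ at $m$, and $t$ the symmetry in the geodesic $op$. Because $m$ is the midpoint of $pq$ one has $A_{r_n}=s_m s_1$, while $R^2=s_1 s_\ell$ and $\mu=s_m s_\ell$, whence $A_{r_n}R^2=s_m s_\ell=\mu$. The apothem $om$ is perpendicular to $pq$, so $s_m$ is also the symmetry in $om$; since $op$ and $om$ meet at $o$ at angle $\pi/n$ while $op$ and $pq$ meet at $p$ at angle $\pi/4$, we get $\rho=s_m t$ and $R=s_\ell t$, hence $t=s_\ell R$ and $\rho=s_m s_\ell R=\mu R=A_{r_n}R^3$. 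This shows $\rho,\mu\in G_{r_n}$; conversely $R=\mu^{-1}\rho$ and $A_{r_n}=\mu R^{-2}$ lie in $\langle\rho,\mu\rangle$, so the two groups coincide and $G_{r_n}$ is discrete with $T_n$ as a fundamental domain. The delicate point I expect to be the main obstacle is the orientation bookkeeping in this last step—keeping the senses of the various rotations and the order of each pair of reflections consistent—but the displayed identities hold once the orientations are fixed, and the conclusion $G_{r_n}=\langle\rho,\mu\rangle$ is insensitive to these choices.
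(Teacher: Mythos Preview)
Your approach is essentially identical to the paper's: the paper also subdivides the side \([p,A_{r_n}(p)]\) at its midpoint \(m\), takes the two side-pairings \(\sigma_1 = A_{r_n}R^2\) (your \(\mu\)) and \(\sigma_2 = A_{r_n}R\) (your \(\rho\)), applies Theorem~\ref{poincare}, and observes that \(\langle\sigma_1,\sigma_2\rangle = G_{r_n}\); you are simply more explicit about the elliptic-cycle check, which the paper leaves to the reader.

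One small slip worth flagging: the identity \(R = s_\ell t\) is off by a sign---since the axes of \(s_\ell\) and \(t\) meet at \(p\) with the directed angle from \(t\) to \(s_\ell\) equal to \(+\pi/4\), one gets \(s_\ell t = R^{-1}\) (equivalently, the paper's Proposition~\ref{rinftyelliptic} uses \(R = s_1 t\)), and hence the correct formula is \(\rho = s_m t = \mu R^{-1} = A_{r_n}R\), not \(A_{r_n}R^3\). As you yourself anticipate, this does not affect the conclusion: either way \(\mu,\rho \in G_{r_n}\) and one recovers \(R\) as \(\rho^{-1}\mu\) (rather than \(\mu^{-1}\rho\)) and then \(A_{r_n} = \mu R^{-2}\), so \(\langle\rho,\mu\rangle = G_{r_n}\) holds regardless.
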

\begin{proof}
 Let \(m\) be the midpoint of the geodesic segment \([p,A_{r_n}(p)]\), and let \(a\) and \(b\) be the other two sides of \(T_n\) where \(a\) has and endpoint at \(p\).
 
 The isometry \(\sigma_1 = A_{r_n}R^2\) fixes \(m\) and sends the geodesic segment \([p,m]\) to \([m,A_{r_n}(p)]\).    The isometry \(\sigma_2 = A_{r_n}R\) maps \(a\) to \(b\) fixing their shared endpoint.
 
 By theorem \ref{poincare} the group generated by \(\sigma_1\) and \(\sigma_2\) is discrete and has \(T_n\) as a fundamental domain.   Since this group coincides with \(G_{r_n}\) this establishes the claim.
\end{proof}

\begin{figure}
\includegraphics[width=\textwidth]{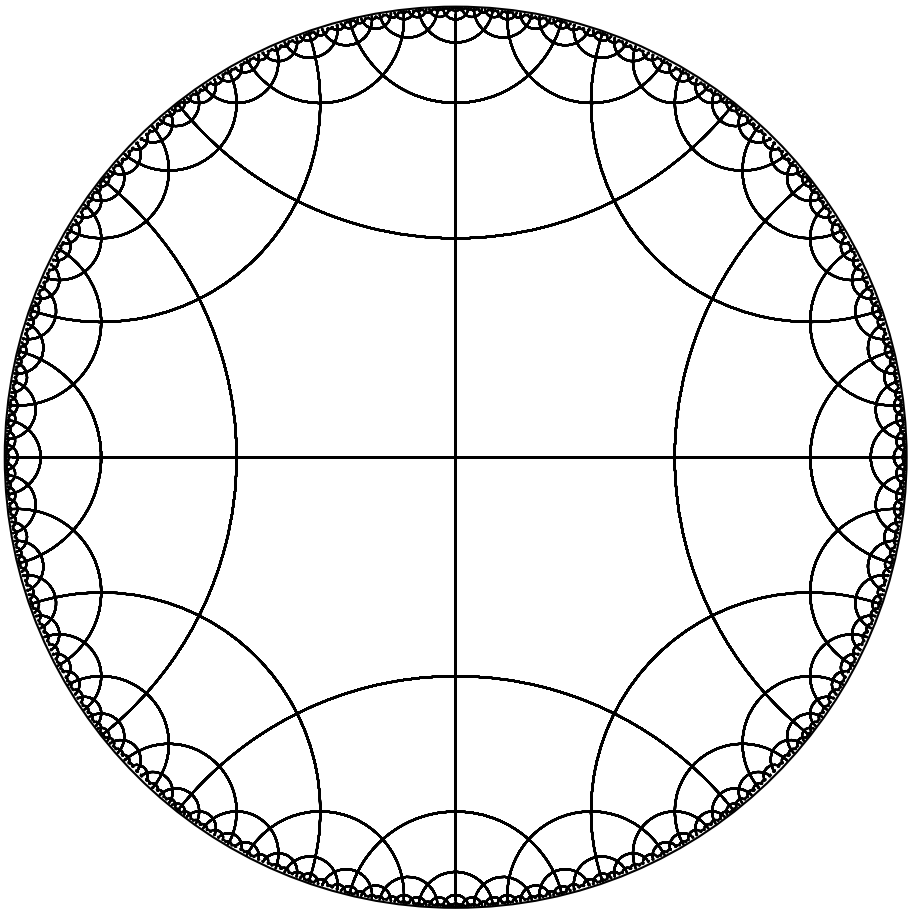}
\caption{The tiling corresponding to \(r = r_5\).\label{figuretiling}}
\end{figure}

\subsection{Trees\label{sectiontrees}}

We will now discuss the case where the group \(G_r\) is discrete and preserves and embedded regular tree of degree four.  This happens exactly when \(r \ge r_\infty\).

\begin{figure}
\includegraphics[width = \textwidth]{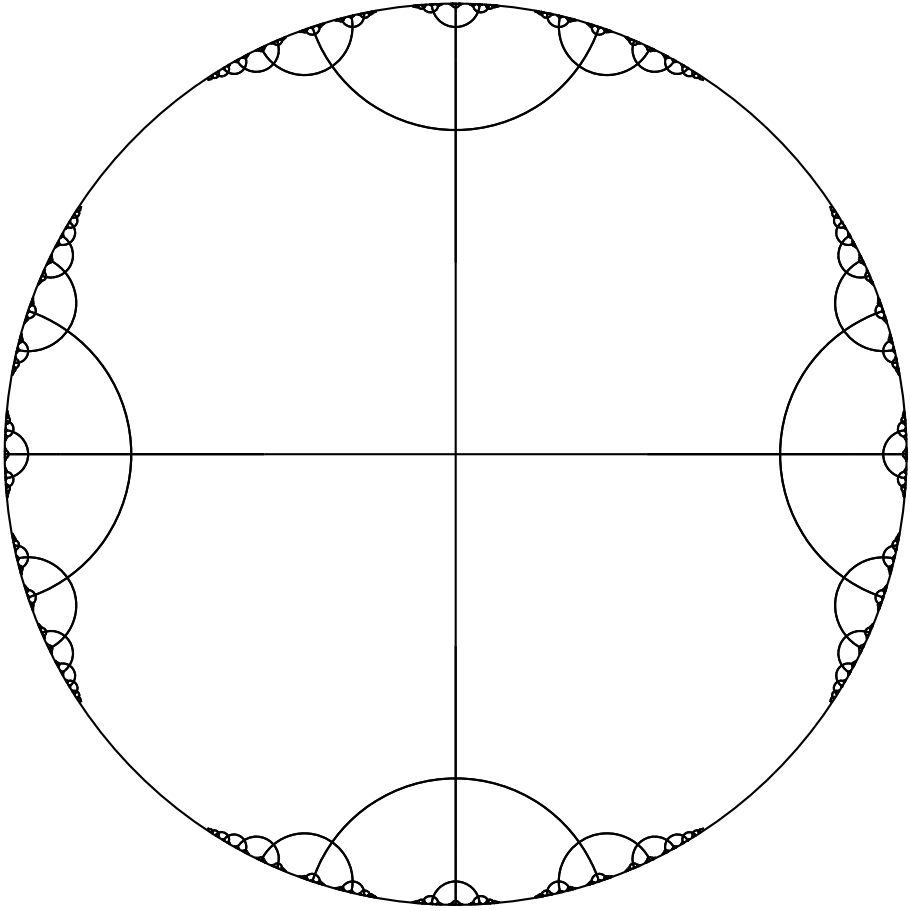}
\caption{The embedded tree for \(r = r_\infty + 0.05\).\label{figuretree}}
\end{figure}

For this purpose let \(B_r = RA_r R^{-1}\) and \(H_r\) be the group generated by \(A_r\) and \(B_r\).

Also, we define the four closed half-planes \(N,S,E,W\) (for North, South, East, and West respectively) by
\[N = \lbrace q:  \dist(q,p) \ge \dist(q,B_r^{-1}(p))\rbrace,\]
\[S = \lbrace q:  \dist(q,p) \ge \dist(q,B_r(p))\rbrace,\]
\[E = \lbrace q:  \dist(q,p) \ge \dist(q,A_r(p))\rbrace,\]
\[W = \lbrace q:  \dist(q,p) \ge \dist(q,A_r^{-1}(p))\rbrace,\]
where \(\dist(a,b)\) is the hyperbolic distance between \(a\) and \(b\).

We define the central region \(C = \H \setminus (N \cup S \cup E \cup W)\).

\begin{proposition}\label{trees}
 The regions \(N,S,E,W\) are pairwise disjoint if and only if \(r \ge r_\infty\).   
 In this case \(H_r\) is discrete and freely generated by \(A_r\) and \(B_r\).   Also, \(G_r\) is discrete, the stabilizer of \(C\) in \(G_r\) is generated by \(R\), and \(G_r\) preserves a geodesic embedding of the regular tree of degree four.
\end{proposition}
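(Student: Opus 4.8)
The plan is to split the proposition into four linked assertions and establish them in the order: (i) the geometric dichotomy that $N,S,E,W$ are pairwise disjoint exactly when $r\ge r_\infty$; (ii) freeness and discreteness of $H_r$ by a ping-pong argument; (iii) the semidirect product structure of $G_r$ over $H_r$, which yields discreteness of $G_r$ and the stabilizer of $C$; and (iv) the construction of the invariant tree.

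For (i) I would exploit the four-fold symmetry coming from conjugation by $R$, which cyclically permutes the four points $A_r^{\pm1}(p),B_r^{\pm1}(p)$ and hence the four bounding perpendicular bisectors. Thus it suffices to analyze one opposite pair and one adjacent pair. The opposite half-planes $E,W$ are bounded by two geodesics perpendicular to the $v$-axis at the midpoints of $[p,A_r(p)]$ and $[p,A_r^{-1}(p)]$, which are ultraparallel at distance $r>0$, so $E\cap W=\emptyset$ for every $r>0$. For the adjacent pair $E,N$ the points $A_r(p),B_r^{-1}(p)$ lie at distance $r$ from $p$ with a right angle at $p$; their bisectors $\ell_E,\ell_N$ meet (equivalently $E\cap N\ne\emptyset$) precisely when the isosceles right triangle $p,A_r(p),B_r^{-1}(p)$ has a finite circumcenter $x$, which by symmetry lies on the geodesic through $p$ bisecting the directions $v$ and $R(v)$. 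Dropping the perpendicular from $x$ to $[p,A_r(p)]$ produces a right triangle with adjacent leg $r/2$ and angle $\pi/4$ at $p$, so the circumradius $\rho$ satisfies $\tanh(r/2)=\tanh(\rho)\cos(\pi/4)$. A finite $\rho$ exists exactly when $\sqrt2\,\tanh(r/2)<1$, and the borderline value $\tanh(r/2)=1/\sqrt2$ gives $\cosh(r)=3$, i.e. $r=r_\infty=\acosh(3)$. Hence the four half-planes are pairwise disjoint if and only if $r\ge r_\infty$, with $\ell_E,\ell_N$ sharing only an ideal point at the critical distance.

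Assuming $r\ge r_\infty$, for (ii) I would read off the ping-pong relations directly from the definitions by applying each isometry to the defining inequality, obtaining $A_r(\H\setminus W)=\mathrm{int}(E)$, $A_r^{-1}(\H\setminus E)=\mathrm{int}(W)$, $B_r(\H\setminus N)=\mathrm{int}(S)$, and $B_r^{-1}(\H\setminus S)=\mathrm{int}(N)$. Taking $X_1=E\cup W$ and $X_2=N\cup S$, which are disjoint by (i), the Table-Tennis Lemma applied to $\langle A_r\rangle$ and $\langle B_r\rangle$ gives $H_r=\langle A_r\rangle\ast\langle B_r\rangle$, free of rank two. Discreteness I would obtain from properness of the action: the ping-pong dynamics show the tiles $h(\overline{C})$, $h\in H_r$, have pairwise disjoint interiors and that any nontrivial reduced word displaces $p$ by at least $r$, so the orbit $H_r(p)$ is $r$-separated; since $h_n\to e$ forces $h_n(p)\to p$, no nontrivial sequence can converge to the identity.

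For (iii) and (iv), the conjugation relations $RA_rR^{-1}=B_r$ and $RB_rR^{-1}=A_r^{-1}$ (the clockwise right angle sends direction $v$ to $R(v)$ while $R^2$ reverses directions) show that $R$ normalizes $H_r$ and cyclically permutes $A_r,B_r,A_r^{-1},B_r^{-1}$. As $H_r$ is torsion-free while $R$ has order four, we get $\langle R\rangle\cap H_r=\{e\}$ and $G_r=H_r\rtimes\langle R\rangle$ with $[G_r:H_r]=4$; discreteness of $G_r$ then follows from that of $H_r$ by the finite-index criterion recalled in the preliminaries. Since $R$ permutes the four half-planes it fixes $C$, and conversely if $g=hR^k$ fixes $C$ then so does $h$, which by ping-pong forces $h=e$ (a nontrivial reduced word carries $C$ into one of the four half-planes, all disjoint from $C$); hence $\mathrm{Stab}_{G_r}(C)=\langle R\rangle$. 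Finally I would take the vertex set $H_r(p)$ with an edge $[h(p),hg(p)]$ for each $g\in\{A_r^{\pm1},B_r^{\pm1}\}$: this is the Cayley graph of the free group $H_r$ on $\{A_r,B_r\}$, hence the regular tree of degree four, its geodesic realization is embedded by disjointness of the tiles, and the $H_r$-action by isometries together with $R(h(p))=(RhR^{-1})(p)$ shows $G_r$ preserves it.

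I expect the main obstacle to be step (i): confirming that the adjacent pair is the only binding constraint and carrying the circumradius computation through so that the threshold is exactly $\acosh(3)$, together with the borderline case $r=r_\infty$, where the half-planes are disjoint in $\H^2$ yet tangent at infinity and one must check that the ping-pong, and hence discreteness in this cusped Schottky situation, still goes through.
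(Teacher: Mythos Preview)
Your proof is correct and follows the same overall strategy as the paper: the geometric threshold in (i), ping-pong for (ii), the normal/finite-index argument for (iii), and the Cayley-graph tree for (iv). The only cosmetic difference is in (i): the paper reaches the threshold via the second hyperbolic law of cosines applied to the triangle with vertices $p$, the midpoint $x$ of $[p,A_r(p)]$, and the intersection point $z\in\partial N\cap\partial E$ (obtaining $r=2\acosh(\sqrt{2}\cos\alpha)$ with $\alpha$ the angle at $z$), whereas you arrive at the same $\acosh(3)$ through the circumradius relation $\tanh(r/2)=\tanh(\rho)\cos(\pi/4)$.
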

\begin{proof}
 Without loss of generality assume the regions \(N\) and \(E\) intersect.  Let \(x,y\) be the closest points to \(p\) in \(N\) and \(E\) respectively.  Observe that the geodesic segments \([p,x]\) and \([p,y]\) have length \(r/2\) and meet at a right angle at \(p\).
 
 If \(N \cap E \neq \emptyset\) then there is an intersection point \(z\) which is in the boundary of both regions and is equidistant from \(x\) and \(y\).
 
 Consider the triangle \(T\) with vertices \(p,x,z\) and let \(\alpha\) be the interior angle at \(z\).  Notice that the interior angle at \(x\) is \(\pi/2\) and at \(p\) is \(\pi/4\) so that \(0 < \alpha < \pi/4\).   By the second hyperbolic law of cosines one has
 \begin{equation}\label{leycosenosejes}r = 2\acosh(\sqrt{2}\cos(\alpha)) < 2\acosh(\sqrt{2}) = \acosh(3) = r_\infty.\end{equation}
 
 Conversely, if the inequality above is satisfied a triangle with angles \(\pi/2,\pi/4,\alpha\) exists in \(\H^2\).  Placing two such triangles with right angles at \(x\) and \(y\) respectively, sharing a vertex at \(p\) and a side along the perpendicular bisector of \(x\) and \(y\) it follows that \(N\) and \(E\) intersect at a common third vertex \(z\).
 
 Suppose now that \(r \ge r_\infty\) so that \(N,S,E,W\) are pairwise disjoint.
 
 Notice that \(A_r(\H^2 \setminus W) \subset E\) ,\( A_r^{-1}(\H^2 \setminus E) \subset W\), \(B_r(\H^2 \setminus N) \subset S\), and \(B_r^{-1}(\H^2 \setminus S) \subset N\).
 
 This shows that if \(X\) is any non-trivial reduced word in \(A_r, A_r^{-1}, B_r, B_r^{-1}\) (i.e. a finite product where no element is followed by its inverse) then \(X(C) \cap C = \emptyset\).  Hence the group \(H_r\) is freely generated by \(A_r\) and \(B_r\) (this is an instance of the well known ping-pong lemma, see for example \cite{koberda2012}) and is discrete since the orbit of \(p\), and the stabilizer of \(p\) is trivial.

 Notice that \(G_r\) is generated by \(H_r\) and \(R\), and \(RH_rR^{-1} = H_r\) (it suffices to check \(RA_rR^{-1} \in H_r\) and \(RB_rR^{-1} \in H_r\)).  Hence, \(H_r\) is a normal subgroup of \(G_r\).
 
 Furthermore, since \(R(C) = C\) it follows that if \(X,Y \in H_r\) and \(XR^i = YR^j\) then \(X = Y\) and \(i = j\ (\text{mod}\ 4)\).  This shows that \(H_r, H_r R, H_r R^2\) and \(H_r R^3\) are pairwise disjoint.  It follows that their union must be \(G_r\) and \([G_r:H_r] = 4\).  If \(XR^i\) is an element of \(G_r\) we have that \(XR^i(C) = X(C) = C\) if and only if \(X\) is the identity, so the stabilizer of  \(C\) in \(G_r\) is generated by \(R\) as claimed.
 
 Let \(x,y,z,w\) be the closest points to \(p\) in the regions \(N,S,E,W\) respectively.  The set \(\lbrace p,x,y,z,w\rbrace\) is \(R\)-invariant and the union of geodesic segments \(S = [p,x] \cup [p,y] \cup [p,z] \cup [p,w]\) intersect pairwise only at \(p\).   Notice that \(X(S) \cap S \neq \emptyset\) for a non-identity element \(X \in H_r\) if and only if \(X \in \lbrace A_r, A_r^{-1},B_r,B_r^{-1}\rbrace\) in which case the intersection is a single point from the set \(\lbrace x,y,z,w\rbrace\).  It follows that the \(H_r\)-orbit of \(S\) is a tree of degree four (the Cayley graph of \(H_r\)) with all edges of length \(r\), and is invariant under \(G_r\) as claimed.
\end{proof}

\subsection{Irrational rotations\label{sectionirrational}}

In this section and the following one we will show that the only values of \(r > 0\) for which \(G_r\) is discrete are given by propositions \ref{polygons} and \ref{trees}.   We will also show that for all other values of \(r\) the group \(G_r\) is dense in \(\isom(\H^2)\).

\begin{figure}
\begin{minipage}{\textwidth}
\begin{minipage}{0.32\textwidth}
\includegraphics[width=\textwidth]{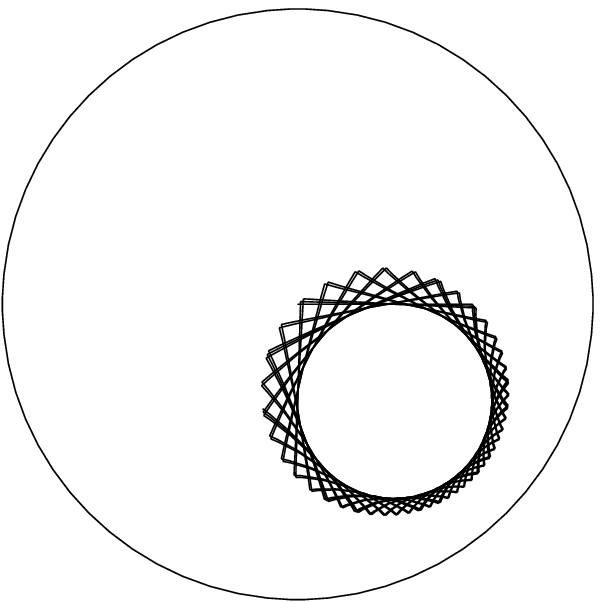}
\end{minipage}
\begin{minipage}{0.32\textwidth}
\includegraphics[width=\textwidth]{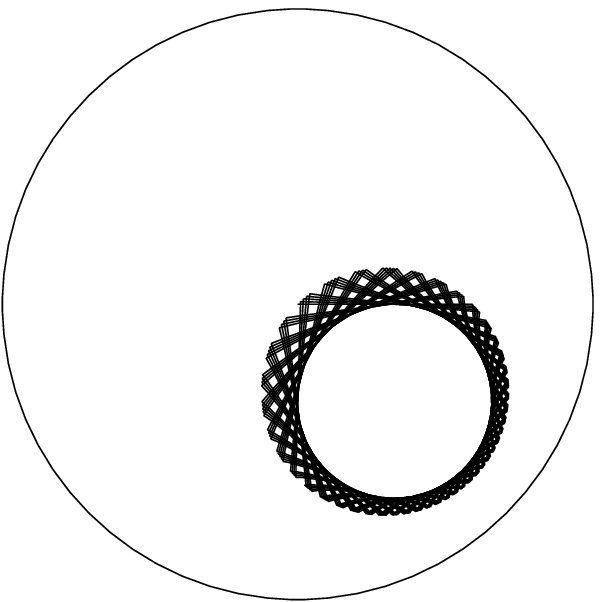}
\end{minipage}
\begin{minipage}{0.32\textwidth}
\includegraphics[width=\textwidth]{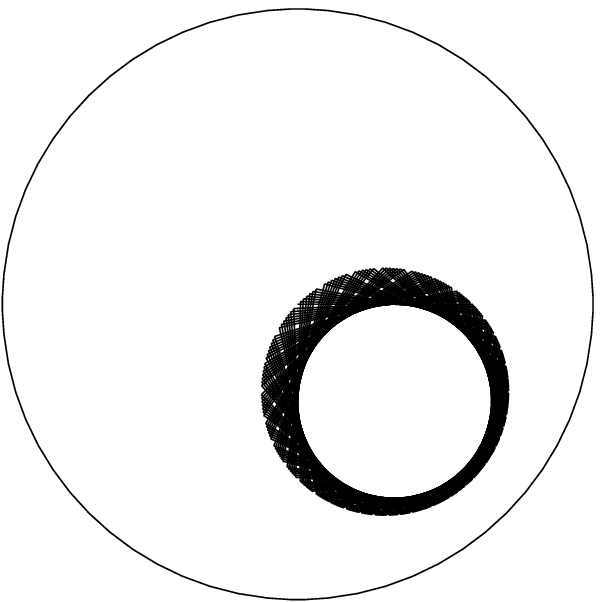}
\end{minipage}
\caption{An illustration of \(100,200\) and \(300\) iterations of \(A_rR\) applied to a segment of length \(r\) for \(r = r_{2\pi}\).\label{figureirrational}}
\end{minipage}
\end{figure}

For this purpose the first important observation is the following:
\begin{proposition}\label{rinftyelliptic}
The isometry  \(A_r R\) is elliptic if and only if \(r < r_\infty\).
\end{proposition}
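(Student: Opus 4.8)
The plan is to reduce the statement to a single trace computation in \(\psl_2(\R)\). Since the elliptic/parabolic/hyperbolic trichotomy is a conjugacy invariant, and since any two initial configurations \((p,v)\) are related by an orientation preserving isometry (which conjugates the corresponding \(A_r R\)), I may place myself in the most convenient position. Working in the upper half-plane model, I will take \(p = i\) and let \(v\) be the upward vertical direction, so that the geodesic through \(p\) in direction \(v\) is the imaginary axis. Recall from the preliminaries that an element of \(\psl_2(\R)\) is elliptic, parabolic, or hyperbolic according to whether the absolute value of its trace is \(< 2\), \(= 2\), or \(> 2\); this is the criterion I will apply to \(A_r R\).

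Next I would write down the two generators explicitly. Using the convention recorded in the preliminaries, the clockwise rotation by a right angle fixing \(i\) corresponds to \(\theta = \pi/4\) (since such a matrix acts as a rotation of angle \(-2\theta\)), giving
\[
R = \frac{1}{\sqrt{2}}\begin{pmatrix} 1 & -1 \\ 1 & 1 \end{pmatrix},
\]
while translation by distance \(r\) along the imaginary axis corresponds to \(2t = r\), giving
\[
A_r = \begin{pmatrix} e^{r/2} & 0 \\ 0 & e^{-r/2} \end{pmatrix}.
\]
Multiplying these and taking the trace yields
\[
\tr(A_r R) = \frac{1}{\sqrt{2}}\left(e^{r/2} + e^{-r/2}\right) = \sqrt{2}\,\cosh\!\left(\frac{r}{2}\right).
\]

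Finally, applying the trace criterion, \(A_r R\) is elliptic precisely when \(\sqrt{2}\,\cosh(r/2) < 2\), i.e. when \(\cosh(r/2) < \sqrt{2}\). Since \(\cosh\) is increasing on \([0,\infty)\) this is equivalent to \(r < 2\acosh(\sqrt{2})\), and \(2\acosh(\sqrt{2}) = \acosh(3) = r_\infty\) is exactly the identity already used in equation \eqref{leycosenosejes} of the proof of Proposition \ref{trees} (it follows from \(\cosh(r) = 2\cosh^2(r/2) - 1\)). This gives the claimed equivalence. I do not expect a genuine obstacle here: the only points requiring care are bookkeeping ones, namely matching the factor-of-two conventions in both the rotation angle and the translation length, and observing that the answer is insensitive to the orientation of \(v\) and to the sense of the rotation, since reversing either changes signs of off-diagonal entries but leaves \(|\tr(A_r R)| = \sqrt{2}\,\cosh(r/2)\) unchanged.
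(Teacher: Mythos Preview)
Your proof is correct, and indeed more direct than the paper's. The paper instead argues geometrically: it writes \(A_r = \sigma_1\sigma_2\) and \(R = \sigma_2\sigma_3\) as products of axial symmetries (\(\sigma_2\) along the geodesic through \(p\) perpendicular to \(v\), \(\sigma_1\) its conjugate by \(A_{r/2}\), and \(\sigma_3\) along the geodesic at \(45^\circ\) from \(v\)), so that \(A_rR = \sigma_1\sigma_3\); this composition is elliptic exactly when the two axes meet, which by the second hyperbolic law of cosines happens iff \(r < r_\infty\). Your trace computation bypasses this reflection decomposition entirely and is self-contained once the matrix forms of \(A_r\) and \(R\) are granted; in fact the paper itself uses exactly these matrices later in the proof of Proposition~\ref{nonprimitiveproposition}, so your approach fits the toolbox already in play. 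What the paper's argument buys is a direct geometric link back to the picture in Proposition~\ref{trees} (the same triangle with angles \(\pi/2,\pi/4\) and side \(r/2\) appears), making the role of \(r_\infty\) visibly the same in both places; your version trades that narrative continuity for brevity.
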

\begin{proof}
We repeat the argument from the proof of Proposition \ref{trees}.

Let \(\sigma_2\) be the axial symmetry (orientation reversing isometry which is the identity along a geodesic) with respect to the geodesic passing through \(p\) in direction perpendicular to \(v\).

Define \(\sigma_1 = A_{r/2}\sigma_2 A_{r/2}^{-1}\) and notice that \(A_r = \sigma_1\sigma_2\).

Letting \(\sigma_3\) be the symmetry with respect to the geodesic passing through \(p\) in a direction \(45º\) clockwise from \(v\), notice that \(R = \sigma_2\sigma_3\).

To conclude observe that \(AR = \sigma_1\sigma_2\sigma_2\sigma_3 = \sigma_1\sigma_3\) is elliptic if and only if the geodesics fixed by \(\sigma_1\) and \(\sigma_3\) intersect.   If this happens there exist a triangle with a side of length \(r/2\) adjacent to angles \(\pi/2\) and \(\pi/4\).   By the second law of cosines (see equation \ref{leycosenosejes}) this happens if and only if \(r < r_\infty\).
\end{proof}

The following well known argument (see \cite[Section 1]{sullivan}) shows that if \(G_r\) is not discrete then it is dense in \(\isom(\H^2)\).  In particular, this happens if \(A_r R\) is elliptic of infinite order.

\begin{proposition}\label{dichotomy}
For each \(r > 0\) either \(G_r\) is discrete or dense in \(\isom(\H^2)\).
\end{proposition}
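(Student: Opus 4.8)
The plan is to pass to the closure $\overline{G_r}$ inside $\isom(\H^2) \cong \psl_2(\R)$. Since the closure of a subgroup of a topological group is again a subgroup, and $\isom(\H^2)$ is a Lie group, Cartan's closed subgroup theorem guarantees that $\overline{G_r}$ is an embedded Lie subgroup. If $G_r$ is not discrete then $\overline{G_r}$ is not discrete either, so its identity component $H := \overline{G_r}^0$ has dimension $1$, $2$, or $3$. I would show that the only possibility is $\dim H = 3$, in which case $H = \isom(\H^2)$ (which is connected) and hence $\overline{G_r} = \isom(\H^2)$, i.e. $G_r$ is dense. The whole argument rests on one structural remark: $H$, being the identity component, is a characteristic subgroup of $\overline{G_r}$, so it is normalized by every element of $\overline{G_r}$, in particular by $R$ and $A_r$.

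Next I would recall the classification of the connected Lie subgroups of $\psl_2(\R)$ by dimension and eliminate the proper cases using this normalization constraint together with the geometry of $R$ and $A_r$. The $2$-dimensional connected subgroups are exactly the point stabilizers of boundary points (the images of the upper-triangular ``$ax+b$'' subgroup); conjugation sends the stabilizer of $\xi \in \partial\H^2$ to the stabilizer of $g(\xi)$, so $H$ being normalized by $R$ would force $R$ to fix a boundary point. But $R$ is elliptic, fixing only the interior point $p$ and no point of $\partial\H^2$, so $\dim H = 2$ is impossible.

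For $\dim H = 1$ the subgroup $H$ is a one-parameter subgroup, and I would split into the elliptic, parabolic, and hyperbolic types according to its fixed-point set in $\H^2 \cup \partial\H^2$. If $H$ is parabolic or hyperbolic its fixed-point set contains a boundary point, which as above $R$ cannot preserve (a quarter-turn about $p$ fixes no boundary point and, having a unique fixed point $p$ with rotation angle $\pi/2 \neq \pi$, preserves no geodesic either, ruling out the hyperbolic case). If $H$ is elliptic its fixed interior point must be the unique fixed point of $R$, namely $p$; but then $H$ is normalized by $A_r$ only if $A_r$ preserves $\lbrace p\rbrace$, whereas $A_r$ is a nontrivial translation with $A_r(p) \neq p$. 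Hence $\dim H = 1$ is impossible as well.

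Therefore $\dim H = 3$, giving $\overline{G_r} = \isom(\H^2)$ and density. I expect the only delicate point to be the bookkeeping in the dimension-$1$ case---specifically verifying that a right-angle rotation preserves no geodesic---together with citing the correct structural facts (Cartan's theorem and the subgroup classification of $\psl_2(\R)$). An equivalent and perhaps cleaner route is to work at the Lie-algebra level, noting that $\operatorname{Ad}(R)$ fixes the elliptic generator and rotates the complementary (spacelike) plane by $\pi/2$, so that the only proper nonzero $\operatorname{Ad}(R)$-invariant subalgebra is the line of rotations about $p$, which is then excluded by $A_r$ exactly as above.
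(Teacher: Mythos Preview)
Your argument is correct and follows essentially the same strategy as the paper: pass to the closure, consider its (normal) identity component as a connected Lie subgroup, and rule out the proper positive-dimensional possibilities using the generators. The only cosmetic differences are that the paper packages the classification of proper connected subgroups by their common fixed-point set \(F\subset\overline{\D}\) (one or two points) rather than by dimension and type, and derives the contradiction from the pair \(A_r,\,B_r=RA_rR^{-1}\) (two hyperbolic elements with distinct axes, hence no common finite invariant set on \(\overline{\D}\)) instead of from \(R\) and \(A_r\) as you do.
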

\begin{proof}
 We use the Poincaré model where \(\H^2\) is identified with the unit disc \(\D = \lbrace z \in \C: |z| < 1\rbrace\) and \(\isom(\H^2)\) with the group \(M\) of complex Möbius transformations preserving \(\D\).
 
 Let \(S\) be the closure of \(G_r\) in \(M\) and \(S_0\) the connected component of the identity in \(S\).  Notice that \(S_0\) is normal in \(S\) and is a connected Lie subgroup of \(M\).
 
 If \(S_0\) has dimension \(0\) then \(S\) (and therefore \(G_r\)) is discrete.  We suppose from now on that this is not the case. 
 
 If \(S_0\) is a proper subgroup of \(M\) then there is a non-empty set \(F\) with at most two points in the closed disk \(\overline{\D}\) such that \(S_0\) is the set of elements in \(M\) fixing all points in \(F\).
 
 Since \(S_0\) is normal in \(S\) it follows that all elements of \(S\) permute the points in \(F\).
 
 However, it is immediate that no finite set in \(\overline{\D}\) is invariant by both \(A_r\) and \(B_r = RA_rR^{-1}\).   Therefore \(S_0 = M\) and \(G_r\) is dense as claimed. 
\end{proof}

\subsection{Non-primitive rotations\label{sectionnonprimitive}}

\begin{figure}
\includegraphics[width = \textwidth]{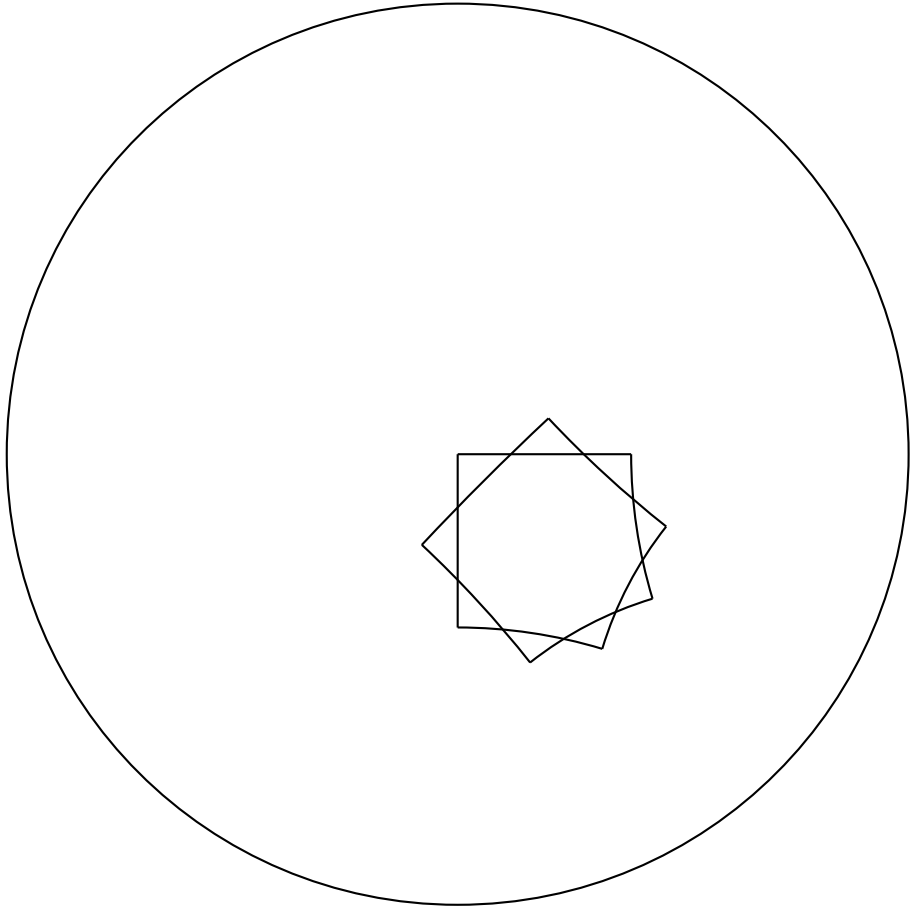}
\caption{A non-simple right angled polygon with sides of length \(r = r_{9/2}\).\label{figurenonprimitive}}
\end{figure}

We extend the definition of \(r_n\) used in proposition \ref{polygons} to all \(t > 4\) with the formula
\[r_t = \acosh\left(1+ 2\cos\left(\frac{2\pi}{t}\right)\right).\]

It is simple to see that \(t \mapsto r_t\) is an increasing homeomorphism from \((4,+\infty)\) to \((0,r_\infty)\) and that \(A_{r_t}R\) is a rotation of angle \(2\pi/t\).

If \(t\) is irrational then by proposition \ref{dichotomy} the group \(G_{r_t}\) is dense in \(\isom(\H^2)\).  Proposition \ref{polygons} shows that if \(t = 5,6,7,\ldots\) then \(G_{r_t}\) is discrete.

It remains to discuss the case \(t = p/q > 4\) where \(p\) and \(q\) are coprime and \(q > 1\).   We will show that \(G_{r_t}\) is dense for these values of \(t\).

For this purpose we will use Jørgensen's inequality (see \cite{jorgensen}) applied to well chosen elements of \(G_r\).

\begin{theorem}[Jørgensen's inequality]
Let \(G\) be a non-elementary Fuchsian group generated by two elements \(X,Y \in \psl_2(\R)\), then
\[|\tr(X)^2 - 4| + |\tr([X,Y]) - 2| \ge 1,\]
where \(\tr(Z)\) denotes the trace of a matrix \(Z\) and \([X,Y] = XYX^{-1}Y^{-1}\) is the commutator of \(X\) and \(Y\).
\end{theorem}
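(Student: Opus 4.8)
The plan is to prove the contrapositive by Jørgensen's iteration argument: assuming \(G = \langle X,Y\rangle\) is discrete and that \(\mu := |\tr(X)^2 - 4| + |\tr([X,Y]) - 2| < 1\), I will produce group elements accumulating at \(X\) (or at the identity), or a fixed point shared by \(X\) and \(Y\); either outcome contradicts discreteness together with the non-elementary hypothesis. Conjugating inside \(\psl_2(\C)\), I may normalise \(X\), splitting into the case \(\tr(X)^2 \ne 4\), where \(X = \mathrm{diag}(u,u^{-1})\) with \(u\) real if \(X\) is hyperbolic and \(|u| = 1\) if \(X\) is elliptic, and the parabolic case \(X = \left(\begin{smallmatrix}1 & 1\\ 0 & 1\end{smallmatrix}\right)\). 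In every case I set \(Y_0 = Y\) and \(Y_{n+1} = Y_n X Y_n^{-1}\), all of which lie in \(G\).

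First I carry out the algebra. Writing \(Y_n = \left(\begin{smallmatrix}a_n & b_n \\ c_n & d_n\end{smallmatrix}\right)\) in the diagonal case, a direct computation gives \(\tr(X)^2 - 4 = (u - u^{-1})^2 =: \beta\), \(\tr([X,Y_n]) - 2 = -\beta\, b_n c_n\), together with the recursions \(b_{n+1} = a_n b_n (u^{-1} - u)\), \(c_{n+1} = c_n d_n (u - u^{-1})\), \(a_{n+1} = u + (u - u^{-1}) b_n c_n\), and \(d_{n+1} = u^{-1} - (u - u^{-1}) b_n c_n\). Setting \(\gamma_n = b_n c_n\), these collapse to the scalar recursion \(\gamma_{n+1} = -\beta\,\gamma_n(1 + \gamma_n)\). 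Since \(\mu = |\beta|(1 + |\gamma_0|)\), an easy induction yields \(|\gamma_n| \le |\gamma_0|\) and hence \(|\gamma_{n+1}| \le \mu\,|\gamma_n|\), so \(\gamma_n \to 0\) geometrically; in particular \(a_n \to u\) and \(d_n \to u^{-1}\).

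Next I complete the two cleanest cases. In the parabolic case the computation specialises to \(\tr([X,Y_n]) - 2 = c_n^2\) and \(c_{n+1} = -c_n^2\), so \(\mu = |c_0|^2 < 1\) forces \(c_n = c_0^{2^n} \to 0\); one checks directly that \(Y_n \to X\), whence discreteness makes \(Y_n\) eventually equal to \(X\), and tracing the recursion backwards forces \(c_0 = 0\), i.e. \(Y\) fixes the fixed point of \(X\), so \(G\) is elementary — this is precisely the Shimizu–Leutbecher lemma. In the elliptic case \(|u| = 1\), and since \(|u - u^{-1}|^2 = |\beta| \le \mu < 1\) both off-diagonal recursions are contractions, giving \(b_n, c_n \to 0\) and again \(Y_n \to X\); discreteness forces \(Y_m = X\) for some \(m\), so \(Y_{m-1}\) commutes with \(X\) and hence shares both its fixed points, and tracing back produces a common fixed point of \(X\) and \(Y\), contradicting non-elementarity.

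The main obstacle is the remaining hyperbolic case, where the two off-diagonal recursions need not both contract: although \(\gamma_n \to 0\), one of \(b_n, c_n\) may grow (the conjugates \(Y_n\) can slide off toward a fixed point of \(X\)), so \(Y_n\) need not converge as matrices and the direct argument breaks down. Here \(\mu < 1\) already forces \(|\tr(X)^2 - 4| < 1\), so \(X\) has small translation length, and the genuine content is the quantitative fact that a short hyperbolic element cannot almost-commute with a non-commuting element in a discrete group. I would exploit that \(|\gamma_n|\) is strictly decreasing yet never zero (non-elementarity gives \(b_0, c_0 \ne 0\), hence \(\gamma_n \ne 0\)), so the distinct elements \([X,Y_n]\) satisfy \(\tr([X,Y_n]) = 2 - \beta\gamma_n \to 2\); combined with the invariant collar about the axis of \(X\) (equivalently the Margulis lemma) this should extract elements of \(G\) accumulating at the identity. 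Making this accumulation rigorous — ruling out the escape of the \(Y_n\) toward the fixed points of \(X\) — is the technical heart, and it is exactly here that one needs the full strength of \(\mu < 1\) rather than merely \(|\beta| < 1\).
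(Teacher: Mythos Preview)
The paper does not prove Jørgensen's inequality; it merely states it, cites \cite{jorgensen}, and applies it in Proposition~\ref{nonprimitiveproposition}. So there is no proof in the paper to compare your attempt against, and your write-up stands or falls on its own.

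Your iteration \(Y_{n+1}=Y_nXY_n^{-1}\) and the scalar recursion \(\gamma_{n+1}=-\beta\gamma_n(1+\gamma_n)\) are exactly Jørgensen's original argument, and your treatment of the parabolic and elliptic cases is correct. The hyperbolic case, however, is genuinely unfinished --- and your proposed rescue is circular. The collar lemma and the two-dimensional Margulis constant are themselves typically obtained \emph{from} Jørgensen's inequality (or from arguments of the same strength); invoking them to finish the proof of Jørgensen's inequality assumes what you are trying to prove. You cannot close the hyperbolic case this way.

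The actual completion is elementary and uses nothing beyond what you already have. With \(X=\mathrm{diag}(u,u^{-1})\) and \(u>1\), conjugating \(Y_n\) by \(X^{k}\) leaves the diagonal entries \(a_n,d_n\) unchanged and rescales the off-diagonals to \(u^{2k}b_n,\;u^{-2k}c_n\). Since \(|b_nc_n|=|\gamma_n|\to 0\), choose \(k_n\in\mathbb{Z}\) so that \(|u^{2k_n}b_n|\) and \(|u^{-2k_n}c_n|\) are both at most \(u\sqrt{|\gamma_n|}\). Then \(Z_n:=X^{k_n}Y_nX^{-k_n}\in G\) converges entrywise to \(X\). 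Discreteness forces \(Z_n=X\), hence \(Y_n=X\), for all large \(n\); reading the diagonal recursion gives \(\gamma_{n-1}=0\). Now trace the recursion \(\gamma_{m+1}=-\beta\gamma_m(1+\gamma_m)\) backwards: once \(|\gamma_m|<1\) (which your estimate \(|\gamma_m|\le\mu^m|\gamma_0|\) guarantees for \(m\) large) the factor \(1+\gamma_m\) cannot vanish, so \(\gamma_m=0\) propagates down to \(\gamma_0=b_0c_0=0\), meaning \(Y\) fixes a fixed point of \(X\) and \(G\) is elementary. That is the missing step; the appeal to Margulis should be removed.
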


\begin{proposition}\label{nonprimitiveproposition}
If \(t = p/q > 4\) is a reduced fraction with \(q > 1\) then \(G_{r_t}\) is dense in \(\isom(\H^2)\).
\end{proposition}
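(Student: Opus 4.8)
The plan is to apply Jørgensen's inequality to two elliptic elements of small, equal rotation angle and contradict discreteness. By Proposition \ref{dichotomy} it suffices to show that \(G_{r_t}\) is not discrete, so I assume throughout that \(G_{r_t}\) is discrete and aim for a contradiction. The key point is that when \(q>1\) the rotation \(A_{r_t}R\) has angle \(2\pi q/p\) with \(\gcd(p,q)=1\), so the group contains a rotation by the smaller \emph{primitive} angle \(2\pi/p\).

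First I would extract this small rotation. Since \(A_{r_t}R\) is a rotation of angle \(2\pi/t = 2\pi q/p\) about some point \(c\), and \(\gcd(p,q)=1\), I choose \(k\) with \(kq \equiv 1 \pmod p\) and set \(X=(A_{r_t}R)^k\), an elliptic rotation of angle \(2\pi/p\) about \(c\); I then put \(Y=RXR^{-1}\in G_{r_t}\), a rotation of the same angle about \(R(c)\). Note that \(t=p/q>4\) with \(q\ge 2\) forces \(p>4q\ge 8\), hence \(p\ge 9\), so the common angle \(2\pi/p\) is small. Before invoking Jørgensen I must verify that \(H=\langle X,Y\rangle\) is non-elementary: as \(X,Y\) are elliptic of order \(p\ge 9\) with distinct fixed points \(c\ne R(c)\), the group \(H\) fixes no interior point; and a nontrivial elliptic fixes no boundary point and cannot lie in the index-two subgroup swapping a boundary pair (such swaps have order \(2\)), so \(H\) fixes no finite boundary set either. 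Hence \(H\) is non-elementary and Jørgensen's inequality applies.

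Next I would convert the geometry into traces. The centers \(c\) and \(R(c)\) are equidistant from \(p\), say at distance \(\rho\), and the angle between them at \(p\) is the rotation angle \(\pi/2\) of \(R\); the first hyperbolic law of cosines gives \(\cosh d=\cosh^2\rho\) for \(d=\dist(c,R(c))\). Locating \(c\) as in the proof of Proposition \ref{rinftyelliptic} gives \(\tanh\rho=\sqrt2\,\tanh(r_t/2)\), which together with \(\cosh r_t=1+2\cos(2\pi q/p)\) simplifies to the clean identity \(\cosh d=\cot^2(\pi q/p)\). Realizing \(X,Y\) as elements of \(\psl_2(\R)\) with centers at distance \(d\) along a common geodesic yields \(\tr(X)=\tr(Y)=2\cos(\pi/p)\) and \(\tr(XY)=2\cos^2(\pi/p)-2\sin^2(\pi/p)\cosh d\). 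Feeding these into the Fricke identity \(\tr([X,Y])=\tr(X)^2+\tr(Y)^2+\tr(XY)^2-\tr(X)\tr(Y)\tr(XY)-2\) collapses, after simplification, to
\[
\tr([X,Y])-2 = 4\sin^4\!\left(\tfrac{\pi}{p}\right)\bigl(\cosh^2 d-1\bigr)
= 4\sin^4\!\left(\tfrac{\pi}{p}\right)\bigl(\cot^4\!\left(\tfrac{\pi q}{p}\right)-1\bigr),
\]
which is nonnegative (so \([X,Y]\) is hyperbolic). I expect this closed-form reduction — the geometric identity for \(\cosh d\) and the Fricke collapse — to be the main obstacle, since both the orientation conventions for \(X,Y\) and the half-angle algebra are easy to mishandle.

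Finally I would assemble the Jørgensen quantity. Using \(\tr(X)^2-4=-4\sin^2(\pi/p)\) and the display above, discreteness of the non-elementary \(H\) would force
\[
1 \le 4\sin^2\!\left(\tfrac{\pi}{p}\right) + 4\sin^4\!\left(\tfrac{\pi}{p}\right)\bigl(\cot^4\!\left(\tfrac{\pi q}{p}\right)-1\bigr) =: J(p,q).
\]
It remains to show \(J(p,q)<1\) for every admissible pair, giving the contradiction. Writing \(a=\pi/p\le\pi/9\) and \(b=\pi q/p=qa\) with \(q\ge 2\), so \(b\le 2a\) fails and instead \(2a\le b<\pi/4\), the first term is at most \(4\sin^2(\pi/9)<\tfrac12\). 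For the second term I would use \(\cot^4 b-1<\cos^4 b/\sin^4 b\le 1/\sin^4 b\) and the monotonicity of \(\sin\) on \([0,\pi/2]\) (\(\sin b\ge\sin 2a\) since \(b\ge 2a\)) to get
\[
4\sin^4 a\,\bigl(\cot^4 b-1\bigr) < \frac{4\sin^4 a}{\sin^4 2a} = \frac{1}{4\cos^4 a} \le \frac{1}{4\cos^4(\pi/9)} < \tfrac13 .
\]
Adding the two estimates gives \(J(p,q)<1\) uniformly, contradicting the displayed inequality. Hence \(G_{r_t}\) is not discrete, and by Proposition \ref{dichotomy} it is dense in \(\isom(\H^2)\).
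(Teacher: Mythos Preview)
Your argument is correct. The overall strategy matches the paper's --- pass to the primitive rotation \(X=(A_{r_t}R)^k\) of angle \(2\pi/p\), pair it with a second element of \(G_{r_t}\), and violate Jørgensen's inequality --- but you choose a different partner. The paper pairs \(X\) with \(R^2\): then \([X,R^2]=(XR^2X^{-1})R^{-2}\) is a product of two half-turns, hence a translation of length \(2\,\dist(p,X(p))\), and the trace comes out directly without any Fricke identity; the Jørgensen quantity collapses to \(4\sin^2(\pi/p)\cosh^2 a\) with \(\cosh a=\cot(\pi q/p)\), and the endgame reduces to the one-line inequality \(\cos(2x)<\cos(x)\). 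Your choice \(Y=RXR^{-1}\) is more symmetric and your computations (the identity \(\cosh d=\cot^2(\pi q/p)\), the Fricke collapse to \(4\sin^4(\pi/p)\sinh^2 d\), and the final estimate via \(\sin b\ge\sin 2a\)) are all correct; the price is the Fricke algebra and a slightly less clean final expression. Either route works, and your non-elementary check for \(\langle X,Y\rangle\) is fine since \(c\neq R(c)\) (as \(c\neq p\)) and elliptics of order \(\ge 3\) neither fix nor swap boundary points.
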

\begin{proof}
We use the upper half plane model where \(\H^2\) is identified with \(\lbrace z \in \C: \im(z) > 0\rbrace\).   
We fix \(p = i\) and \(v = i\) (the unit tangent vector pointing upwards with base point \(i\)).
The group \(\isom(\H^2)\) is identified with \(\psl_2(\R)\) where 
\[\begin{pmatrix}a & b\\ c & d\end{pmatrix}\]
corresponds to the isometry \(z \mapsto \frac{az + b}{cz + d}\).

With this identification we have
\[A_r = \begin{pmatrix}e^{r/2} & 0\\ 0 & e^{-r/2}\end{pmatrix}, R = \frac{1}{\sqrt{2}}\begin{pmatrix}1 & -1\\ 1 & 1\end{pmatrix}.\]

In general if \(R_\theta\) is the matrix corresponding to the clockwise rotation of angle \(\theta\) fixing \(p\) one has \(\tr(R) = 2\cos(\theta/2)\).   If \(T\) is the matrix corresponding to a translation along a geodesic which passes through \(p\) then \(\tr(T) = 2\cosh(\dist(p,T(p))/2)\). 

Let \(o\) be the fixed point of \(A_{r_t}R\).  The triangle with vertices \(p,o,Ap\) has angles \(\pi/4,\pi/4,2\pi/t\) so that by the second law of cosines one has
\[a = \dist(p,o) = \acosh\left(\frac{1+\cos(2\pi/t)}{\sin(2\pi/t)}\right).\]

Notice that, because \(t = p/q\), for some integer \(k\) one has that \(X = (A_{r_t}R)^k\) is a rotation of angle \(2\pi/p\) fixing \(o\).

We will apply Jørgensen's inequality to \(X\) and \(R^2\).  For this purpose notice first that
\[|\tr(X)^2 -4| = |4\cos(\pi/p)^2 - 4| = 4(1-\cos(\pi/p)^2) = 4\sin(\pi/p)^2.\]

We now notice that 
\[[X,R^2] = XR^2X^{-1}R^{-2} = (XR^2X^{-1})R^2\]
is the composition of a central symmetry (i.e. a \(180º\) rotation) centered at \(p\), and a central symmetry centered at \(X(p)\).   It follows that \([X,R^2]\) is a translation of distance \(2\dist(p,X(p))\) along the geodesic passing through \(p\) and \(X(p)\).  

This implies that \(\tr([X,R^2]) = 2\cosh(\dist(p,X(p)))\).   Since \(p\) and \(X(p)\) are at distance \(a\) from \(o\) and the segments \([p,o]\) and \([X(p),o]\) form an angle of \(2\pi/p\) by the law of cosines one has
\begin{align*}
\tr([X,R^2]) - 2&= 2 (\cosh(\dist(p,X(p))) - 1)
\\ &= 2(\cosh(a)^2- \sinh(a)^2 \cos(2\pi/p) - 1)
\\ &= 2(1-\cos(2\pi/p))\sinh(a)^2
\\ &= 2(1-\cos(\pi/p)^2 + \sin(\pi/p)^2)\sinh(a)^2
\\ &= 4\sin(\pi/p)^2\sinh(a)^2.
\end{align*}

From this we obtain
\begin{align*}
|\tr(X)^2 - 4| + |\tr([X,R^2])- 2| &= 4\sin(\pi/p)^2\cosh(a)^2 
\\ &= 4\sin\left(\frac{\pi}{p}\right)^2\left(\frac{1+\cos\left(\frac{2\pi q}{p}\right)}{\sin\left(\frac{2\pi q}{p}\right)}\right)^2. 
\end{align*}

Denote the right hand side above by \(f(p,q)\), and notice that if \(q \ge 2\) and \(\frac{p}{q} > 4\) then \(f(p,q) \le f(p,2)\). 
So it suffices to show that \(f(p,2) < 1\) for all integers \(p\) with \(p/2 > 4\) (so \(p \ge 9\)).

Hence, setting \(x = \pi/p\) it suffices to show that
\[4\sin(x)^2\frac{(1+\cos(4x))^2}{\sin(4x)^2} < 1,\]
when \(0 < x < \pi/9\).  We will show that, in fact, the above inequality holds when \(x \in (0,\pi/4)\).

Reordering and taking square roots, we must prove that 
\[2(1+\cos(4x)) < \frac{\sin(4x)}{\sin(x)},\]
which applying the double angle formulas is equivalent to
\[2(1+\cos(2x)^2-\sin(2x)^2) = 4\cos(2x)^2 < \frac{2\sin(2x)\cos(2x)}{\sin(x)}.\]

For \(x \in (0,\pi/4)\) one has that \(\cos(2x)\) is positive, so the above is equivalent to
\[2\cos(2x) < \frac{\sin(2x)}{\sin(x)},\]
which using the double angle formula for \(\sin(2x)\) yields
\[2\cos(2x) < \frac{2\sin(x)\cos(x)}{\sin(x)} = 2\cos(x),\]
which holds for all \(x \in (0,\pi/4)\).
\end{proof}

\section{Proof of theorem \ref{dimensionthreetheorem}\label{threedimensionalsection}}

Let \(p,v_1,v_2,v_3,A_r,R_{12},R_{23},R_{31}\) be as defined preceeding the statement of theorem \ref{dimensionthreetheorem}.

If \(r \notin \lbrace r_n\rbrace \cup [r_\infty,+\infty)\) then considering the subgroups generated by \(A_r,R_{12}\) and \(A_r,R_{21}\) respectively and applying theorem \ref{maintheorem} one has that \(G_r\) is dense in the set of isometries of two perpendicular geodesically embedded copies of \(\H^2\) in \(\H^3\).  

It follows that, given any point \(q \in \H^3\), the closure of \(G_r\) contains the rotations fixing \(q\) with axis perpendicular to the two aforementioned hyperbolic planes.   This implies that the closure of \(G_r\) contains all rotations fixing \(q\) and therefore that \(G_r\) is dense in \(\isom(\H^3)\) in all these cases.

It remains to show that \(G_r\) is discrete for all \(r \in \lbrace r_5,r_6,\ldots\rbrace \cup [r_\infty,+\infty)\).

The same ping-pong argument given in the proof of Proposition \ref{trees} above (using six instead of four regions) yields that \(G_r\) is discrete if \(r \ge r_\infty\).
\begin{lemma}
 If \(r \ge r_\infty\) then \(G_r\) is discrete and preserves an embedding of the regular tree of degree six.
\end{lemma}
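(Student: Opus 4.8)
The plan is to mimic the two-dimensional ping-pong construction from Proposition \ref{trees}, replacing the four half-planes \(N,S,E,W\) by six half-spaces in \(\H^3\) associated with the six ``neighbors'' of \(p\) one can reach in a single step. Concretely, set \(A_r^{(1)} = A_r\) and define the conjugates \(A_r^{(2)} = R_{12} A_r R_{12}^{-1}\), \(A_r^{(3)} = R_{23} A_r^{(2)} R_{23}^{-1}\), and so on, so that \(A_r^{(1)}, \ldots, A_r^{(6)}\) are the six translations of distance \(r\) along the coordinate geodesic directions \(\pm v_1, \pm v_2, \pm v_3\). For each \(i\) I would define a closed half-space \(H_i = \lbrace q : \dist(q,p) \ge \dist(q, (A_r^{(i)})^{-1}(p))\rbrace\), i.e. the set of points at least as close to \(p\) as to the \(i\)-th neighbor, bounded by the perpendicular bisector plane of the segment \([p, A_r^{(i)}(p)]\).

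The first key step is to show that the six half-spaces \(H_1,\ldots,H_6\) are pairwise disjoint precisely when \(r \ge r_\infty\). This reduces to exactly the same triangle computation as in Proposition \ref{trees}: two adjacent bisector planes meet iff the associated neighbor directions, which are orthogonal (forming a right angle at \(p\)), allow a triangle with a side of length \(r/2\) adjacent to angles \(\pi/2\) and \(\pi/4\), and by equation \eqref{leycosenosejes} this fails exactly when \(r \ge r_\infty\). The point is that any two of the six bisector planes are determined by two neighbor directions meeting at \(p\) at an angle of either \(\pi/2\) (distinct axes) or \(\pi\) (opposite ends of the same axis); the worst case is the right angle, which is governed by the identical two-dimensional estimate, so the threshold is unchanged. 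I would note that opposite half-spaces (along the same axis) are trivially disjoint for \(r > 0\).

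The second step is the ping-pong argument itself. Writing \(C = \H^3 \setminus (H_1 \cup \cdots \cup H_6)\) for the central region, I would verify the analogue of the inclusions in Proposition \ref{trees}: each generator \(A_r^{(i)}\) maps the complement of the ``entering'' half-space into the ``exiting'' half-space, so that any nontrivial reduced word in the \(A_r^{(i)}\) carries \(C\) off itself. By the ping-pong lemma this shows the group \(H_r\) generated by the six translations is free (of rank three, since the six generators pair into three inverse pairs) and discrete. The full group \(G_r\) is generated by \(H_r\) together with the rotations \(R_{12}, R_{23}, R_{31}\), which permute the six directions and hence conjugate \(H_r\) to itself and preserve \(C\); so \(H_r\) is normal in \(G_r\) of finite index, whence \(G_r\) is discrete. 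Finally, taking the union of the six geodesic segments from \(p\) to its neighbors and spreading it over the \(H_r\)-orbit yields a geodesically embedded regular tree of degree six preserved by \(G_r\), exactly as in the degree-four case.

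The main obstacle I anticipate is bookkeeping rather than genuine difficulty: in three dimensions the conjugation action of the rotation subgroup generated by \(R_{12},R_{23},R_{31}\) on the six directions is richer than the cyclic \(R\)-action in the planar case, so I would need to confirm that this subgroup acts on the six half-spaces (equivalently, on the six tree edges at \(p\)) by a group of symmetries of the octahedron/tree, and in particular that the stabilizer of \(C\) is exactly this rotation subgroup in order to pin down the index \([G_r : H_r]\). The disjointness threshold, by contrast, should transfer verbatim from the planar estimate since every relevant pair of directions meets at \(p\) at an angle of at least \(\pi/2\).
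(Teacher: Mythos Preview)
Your proposal is correct and follows essentially the same route as the paper: six half-spaces, ping-pong for the rank-three free group \(H_r\), normality under the rotation subgroup, finite index, and the tree as the \(H_r\)-orbit of the six half-edges at \(p\). The paper carries out the disjointness step by orthogonally projecting a putative intersection point onto the totally geodesic plane spanned by the two relevant directions (which reduces it literally to Proposition~\ref{trees}), and it pins down the index as \([G_r:H_r]=|S|=24\), where \(S\) is the order-\(24\) rotation group generated by \(R_{12},R_{23},R_{31}\); these are exactly the two points you flagged as bookkeeping.
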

\begin{proof}
 Let \(T_1 = A_r,  T_2 = R_{12}A_r R_{12}^{-1}, T_3 = R_{31}A_r R_{31}^{-1},\) and for \(i = 1,2,3\) let
\(N_i = \lbrace q:  \dist(q,p) \ge \dist(q,T_i(p))\rbrace,\) and \(S_i = \lbrace q:  \dist(q,p) \ge \dist(q,T_i^{-1}(p))\rbrace\).

We claim that the six regions \(N_1,S_1,N_2,S_2,N_3,S_3\) are disjoint if and only if \(r \ge r_\infty\).

To establish the claim assume, without loss of generality (since one may permute and take the inverse of the transformations \(T_i\)), that \(N_1 \cap N_2 \neq \emptyset\) and let \(q' \in N_1 \cap N_2\).

Let \(q\) be the orthogonal projection of \(q'\) onto the plane \(P\) containing \(p\) and tangent vectors \(v_1,v_2\) (defined preceding the statement of theorem \ref{dimensionthreetheorem}).   The triangles with vertices \((p,q,q'), (T_1(p),q,q')\) and \((T_2(p),q,q')\) have a right angle at \(q\), and share the side joining \(q\) and \(q'\).   Since \(\dist(q',T_i(p)) \le \dist(q',p)\) for \(i = 1,2\) it follows that \(\dist(q,T_i(p)) \le \dist(q,p)\) as well.  Therefore \(q \in N_1 \cap N_2\).

Since the group generated by \(A_r\) and \(R_{12}\) preserves \(P\), it follows from proposition \ref{trees} that \(N_1 \cap P\) and \(N_2 \cap P\) are disjoint if and only if \(r \ge r_\infty\).    Hence \(N_1\) and \(N_2\) are disjoint if and only if \(r \ge r_\infty\) as claimed. 

Notice that \(T_i(\H^3 \setminus S_i) \subset N_i\) and \(T_i^{-1}(\H^3 \setminus N_i) \subset S_i\) for all \(i\).

Letting \(C = \H^3 \setminus \bigcup\limits_{i = 1}^3 (N_i \cup S_i)\) this implies that if \(X\) is any non-trivial reduced word in \(T_1, T_1^{-1}, T_2, T_2^{-1},T_3, T_3^{-1}\) then \(X(C) \cap C = \emptyset\).  Hence the group \(H_r\) generated by \(T_1,T_2,T_3\) is free and discrete.

We now claim that \(H_r\) has finite index in \(G_r\) and therefore \(G_r\) is also discrete.

To see this let \(S\) be the group generated by \(R_{12},R_{23}\), and \(R_{31}\).   Notice that \(S\) is finite and in fact \(|S| = 24\).

One has that \(G_r\) is generated by \(H_r\) and \(S\), and \(RH_rR^{-1} = H_r\) for all \(R \in S\) (it suffices to check this for the generators).  Hence, \(H_r\) is a normal subgroup of \(G_r\).
 
Furthermore, since \(R(C) = C\) for all \(R \in S\) it follows that if \(X,Y \in H_r\) and \(XR_1 = YR_2\) with \(R_1,R_2 \in S\) then \(X(C) = Y(C)\) and therefore \(X = Y\).  
This shows that \(H_rR_1\) and \(H_rR_2\) are disjoint, and it follows that \([G_r:H_r] = |S| = 24\) as claimed.

Notice furthermore that \(g(C) = C\) for \(g \in G_r\) if and only if \(g \in S\).

To conclude we now show that the action of \(G_r\) preserves a tree of degree six.

For this purpose for each \(i\) let \(n_i,s_i\) be the closest points to \(p\) in the regions \(N_i\) and \(S_i\) respectively.  The set \(\lbrace p\rbrace \cup \bigcup\limits_{i = 1}^3\lbrace n_i,s_i\rbrace\) is \(S\)-invariant and the union of geodesic segments 
\(A = \bigcup\limits_{i = 1}^3[p,s_i]\cup [p,n_i]\) intersect pairwise only at \(p\).   
Notice that \(X(A) \cap A \neq \emptyset\) for a non-identity element \(X \in H_r\) if and only if \(X \in \lbrace T_1,T_1^{-1},T_2,T_2^{-1},T_3,T_3^{-1}\rbrace\) in which case the intersection is a single point from the set \(\lbrace n_1,s_1,n_2,s_2,n_3,s_3\rbrace\).  It follows that the \(H_r\)-orbit of \(A\) is a tree of degree six (the Cayley graph of \(H_r\)) with all edges of length \(r\), and is invariant under \(G_r\) as claimed.
\end{proof}

We will now discuss the cases where \(r = r_n\) for \(n=5,6,\ldots\).

\begin{lemma}\label{dimthreelemma}
For all \(n \ge 5\) the group \(G_{r_n}\) is discrete.    The group \(G_{r_5}\) is cocompact, \(G_{r_6}\) is not cocompact but has finite covolume, and \(G_{r_n}\) has infinite covolume for all \(n \ge 7\).
\end{lemma}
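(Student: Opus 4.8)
The plan is to realize \(G_{r_n}\) as the orientation-preserving index-two subgroup of a hyperbolic Coxeter reflection group, and then to read off discreteness, cocompactness, and the covolume from the geometry of the corresponding Coxeter simplex. Working in coordinates where \(v_1,v_2,v_3\) are the standard basis at \(p\), let \(m_1,m_2,m_3\) be the reflections in the three planes through \(p\) spanned by the octahedral mirror directions \(\{x_1=x_2\},\{x_3=0\},\{x_2=x_3\}\); then \(\langle m_1,m_2,m_3\rangle\) is the full octahedral group of order \(48\) fixing \(p\), with characteristic triangle of angles \(\pi/2,\pi/3,\pi/4\), and \(R_{12},R_{23},R_{31}\) generate its orientation-preserving subgroup of order \(24\). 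Let \(m_4\) be the reflection in the perpendicular bisector \(\Sigma\) of \([p,A_{r_n}(p)]\) (orthogonal to \(v_1\) at distance \(r_n/2\)), and set \(\Gamma_n=\langle m_1,m_2,m_3,m_4\rangle\). First I would verify \(G_{r_n}=\Gamma_n^+\): writing \(A_{r_n}=m_4n_0\) with \(n_0\) the octahedral reflection in \(\{x_1=0\}\) shows \(G_{r_n}\subseteq\Gamma_n\), and conversely each generator \(m_im_4\) of \(\Gamma_n^+\) lies in \(G_{r_n}\), since conjugating \(A_{r_n}\) by \(m_i\) returns \(A_{r_n}\) when \(m_i\) contains \(v_1\) and returns the translation \(R_{12}A_{r_n}R_{12}^{-1}\) when \(m_1\) swaps \(v_1,v_2\).

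Next I would compute the six dihedral angles of the simplex \(P_n\) bounded by \(m_1,m_2,m_3,m_4\). Among \(m_1,m_2,m_3\) they are \(\pi/2,\pi/3,\pi/4\). Because \(m_2,m_3\) both contain the direction \(v_1\) while \(\Sigma\perp v_1\), the angles \((m_2,m_4)\) and \((m_3,m_4)\) are both \(\pi/2\). The only nontrivial angle is \((m_1,m_4)\), which I would measure inside the invariant plane \(P_{12}\): exactly as in Theorem \ref{maintheorem} this reduces to a right triangle with one leg \(r_n/2\) adjacent to an angle \(\pi/4\), so the second law of cosines gives \(\cos\theta=\sin(\pi/4)\cosh(r_n/2)\), and since \(\cosh(r_n/2)=\sqrt{2}\cos(\pi/n)\) this yields \((m_1,m_4)=\pi/n\). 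Thus every dihedral angle of \(P_n\) is a submultiple of \(\pi\) and \(\Gamma_n\) is the Coxeter group with linear diagram \([4,3,n]\); by the Poincaré polyhedron theorem for reflection groups \cite{delaharpe}, \(\Gamma_n\) is discrete with fundamental domain \(P_n\), so \(G_{r_n}=\Gamma_n^+\) is discrete for all \(n\ge5\).

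Finally, the covolume trichotomy comes from classifying the four vertices of \(P_n\) via the angle sums of their vertex links. Three of them have link angle sums \(\pi/2+\pi/3+\pi/4\), \(\pi/2+\pi/2+\pi/n\), and \(\pi/4+\pi/2+\pi/2\), each exceeding \(\pi\), hence are genuine points of \(\H^3\). The decisive vertex is \(m_1\cap m_3\cap m_4\), whose link has angles \(\pi/3,\pi/n,\pi/2\) and angle sum \(\tfrac{5\pi}{6}+\tfrac{\pi}{n}\): this is \(>\pi\) for \(n=5\), equal to \(\pi\) for \(n=6\), and \(<\pi\) for \(n\ge7\). By Andreev's theorem \cite{roeder-hubbard-dunbar} (equivalently, the classification of Lannér and quasi-Lannér simplices), \(P_5\) is compact, \(P_6\) has a single ideal vertex and finite volume, and \(P_n\) for \(n\ge7\) has a hyperideal vertex and infinite volume. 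Since \([\Gamma_n:G_{r_n}]=2\), this shows \(G_{r_5}\) is cocompact, \(G_{r_6}\) has finite covolume but is not cocompact, and \(G_{r_n}\) has infinite covolume for all \(n\ge7\).

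The step I expect to be most delicate is the last one for \(n\ge7\): here \(P_n\) is a noncompact, infinite-volume convex polyhedron with a hyperideal end, so I must invoke the version of the Poincaré theorem valid for acute-angled polyhedra that are neither compact nor of finite volume, and be careful to use the \emph{untruncated} simplex (so that \(\Gamma_n\) is genuinely of infinite covolume) rather than its truncation, which would produce a different, finite-covolume reflection group. Checking that this untruncated \(P_n\) is a bona fide fundamental domain—so that its infinite volume really is the covolume of \(\Gamma_n\)—is the crux of the argument.
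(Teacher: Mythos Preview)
Your proof is correct and takes a genuinely different route from the paper.  The paper works with large right–angled polyhedra: for \(n=5\) it invokes the right–angled dodecahedron from Andreev's theorem, while for \(n\ge 6\) it builds by hand (lemma~\ref{polyhedralemma}) an infinite–volume polyhedron whose faces are right–angled \(n\)–gons meeting three to a vertex; it then shows \(G_{r_n}\) lies inside the product \(SG\) of the face–reflection group \(S\) with the (discrete) combinatorial stabilizer \(G\) of the polyhedron, and reads off the covolume from the geometry of those polyhedra case by case.  You instead realize \(G_{r_n}\) exactly as the index–two orientation–preserving subgroup of the rank–four Coxeter group \([4,3,n]\), so discreteness and the covolume trichotomy drop out of the classical classification of hyperbolic Coxeter simplices (Lann\'er for \(n=5\), quasi–Lann\'er for \(n=6\), hyperideal for \(n\ge 7\)).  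Your approach is more uniform and economical and pins down \(G_{r_n}\) precisely rather than merely as a subgroup of something discrete; the paper's approach, on the other hand, has the merit of exhibiting the invariant tilings by right–angled \(n\)–gons, which is geometrically appealing and closer in spirit to the two–dimensional picture of section~\ref{tilingssection}.

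Two small remarks on your write–up.  First, your verification that \(\Gamma_n^+\subseteq G_{r_n}\) is slightly garbled; the clean way is to note that once \(A_{r_n}=m_4 n_0\) with \(n_0\in\langle m_1,m_2,m_3\rangle\), every product \(m_im_4=(m_in_0)A_{r_n}^{-1}\) lies in (octahedral rotations)\(\cdot A_{r_n}^{-1}\subset G_{r_n}\), and the \(m_im_j\) with \(i,j\le 3\) are already octahedral rotations.  Second, your worry about the \(n\ge 7\) case is not really a difficulty: Vinberg's version of the Poincar\'e theorem (as in \cite{delaharpe}) applies to any acute–angled convex polyhedron bounded by finitely many hyperplanes, with no finiteness–of–volume hypothesis, so the untruncated simplex with its hyperideal vertex is a bona fide fundamental domain and its infinite volume is the covolume of \(\Gamma_n\).
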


To prove the result we will construct polyhedral tilings of \(\H^3\) which are preserved in each case.  Only in the case \(r = r_5\) are the polyhedra compact.

A finite sided polyhedron with sides which are regular \(n\)-gons with interior right-angles, and all dihedral angles equal to \(90º\), cannot exist if \(n \ge 6\).   To see this we give an argument communicated to us by Roland Roeder.

Suppose such a polyhedron exists for some \(n\), let \(V,E,F\) be the number of vertices, edges, and faces respectively.  Because the dihedral angles are non-obtuse each vertex is the intersection of exactly three faces by \cite[Proposition 1.1]{roeder-hubbard-dunbar}, so  \(V = nF/3\).   Since each edge is the intersection of two faces one has \(E = nF/2\).   Substituting this into Euler's polyhedron formula we obtain
\[V - E + F = \left(\frac{n}{3} - \frac{n}{2} + 1\right)F = \frac{6-n}{6}F = 2.\]

It follows that \(n \le 5\) from which \(n = 5\) is the only possibility in \(\H^3\).  We now show that this possibility actually occurs.

\begin{lemma}
There exists a convex hyperbolic dodecahedra \(C\) whose faces are regular right-angled hyperbolic pentagons.
\end{lemma}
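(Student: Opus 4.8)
The plan is to realize $C$ inside a one-parameter family of regular hyperbolic dodecahedra and to single out the right-angled member by an intermediate value argument applied to the dihedral angle. Concretely, I would work in the Klein (projective) model of $\H^3$, in which totally geodesic planes are exactly the intersections with the open ball of Euclidean planes. Placing the $20$ vertices of a Euclidean regular dodecahedron at a common Euclidean distance $\rho \in (0,1)$ from the center and taking the Euclidean convex hull yields, for each $\rho$, a convex hyperbolic dodecahedron $C_\rho$ whose faces lie on geodesic planes. The symmetry group of the Euclidean dodecahedron acts on the ball by hyperbolic isometries fixing the center, so all twelve faces of $C_\rho$ are congruent regular hyperbolic pentagons and all thirty dihedral angles equal a common value $\theta(\rho)$ depending continuously (indeed real-analytically) on $\rho$.

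I would then analyze the two extremes. As $\rho \to 0^+$ the polyhedron shrinks to a point and the local geometry converges to the Euclidean one, so $\theta(\rho) \to \arccos(-1/\sqrt{5}) \approx 116.57^\circ > 90^\circ$. As $\rho \to 1^-$ the vertices tend to the sphere at infinity and $C_\rho$ converges to the ideal regular dodecahedron; at each ideal vertex exactly three faces meet, so a small horosphere there cuts out a Euclidean equilateral triangle whose angles are the three (equal) dihedral angles, forcing $\theta(\rho) \to 60^\circ < 90^\circ$. By the intermediate value theorem there is some $\rho^\ast \in (0,1)$ with $\theta(\rho^\ast) = 90^\circ$, and I set $C = C_{\rho^\ast}$.

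It remains to check that the faces of $C$ are right-angled, and not merely regular. At any finite vertex of $C$ exactly three faces meet, and the link of the vertex is a spherical triangle whose angles are the three dihedral angles along the edges at that vertex and whose side lengths equal the three interior face angles meeting there. Since all dihedral angles of $C$ equal $90^\circ$, this link is a trirectangular spherical triangle, all of whose sides have length $90^\circ$; hence every interior angle of every pentagonal face is a right angle. As the faces are already regular by symmetry, $C$ is the desired convex hyperbolic dodecahedron with regular right-angled pentagonal faces.

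I expect the main obstacle to be the rigorous control of $\theta(\rho)$ at the two endpoints — in particular identifying the $\rho \to 1^-$ limit with the ideal regular dodecahedron and evaluating its dihedral angle — together with confirming that $\theta$ varies continuously across the whole family. Once both endpoint values are pinned on opposite sides of $90^\circ$, the existence of $C$ follows immediately.
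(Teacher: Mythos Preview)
Your argument is correct and self-contained; it is a genuinely different route from the paper's. The paper simply invokes Andreev's theorem \cite{roeder-hubbard-dunbar}: that theorem guarantees the existence (and uniqueness up to isometry) of a compact hyperbolic dodecahedron with all dihedral angles equal to $\pi/2$, and then the same reference is used to deduce that the face angles are right angles, so the faces are regular right-angled pentagons. Your approach instead constructs the one-parameter family $C_\rho$ of regular dodecahedra in the Klein model and finds the right-angled member by the intermediate value theorem applied to the common dihedral angle $\theta(\rho)$; the link-of-a-vertex computation then gives the face angles. The trade-off is clear: the paper's proof is a two-line appeal to a deep classification theorem and yields uniqueness for free, whereas yours is elementary and avoids Andreev entirely at the cost of a little more work at the endpoints (and does not, as written, give uniqueness---though the lemma only asks for existence). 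Your endpoint analyses are standard and correct: the $\rho\to 0^+$ limit is the Euclidean dihedral angle $\arccos(-1/\sqrt{5})>\pi/2$, and for $\rho\to 1^-$ the horospherical link at an ideal vertex is a Euclidean equilateral triangle, forcing $\theta\to\pi/3<\pi/2$; continuity of $\theta$ in $\rho$ is immediate from the Klein-model description.
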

\begin{proof}
By Andreev's theorem \cite[Proposition 1.1]{roeder-hubbard-dunbar}, there exists up to isometry a unique hyperbolic dodecahedron \(C\) such that the angle between any two faces at a shared edge is \(90º\).

It follows (for example from \cite[Proposition 1.1]{roeder-hubbard-dunbar}) that all the interior angles of each face are also right angles.   Hence, all faces are regular pentagons with interior right angles and their side length is \(r_5\).
\end{proof}

We will now show that, if \(n =6,7,\ldots\),  gluing hyperbolic \(n\)-gons at a right angle along each edge one bounds an infinite volume convex polyhedra in \(\H^3\).
\begin{lemma}\label{polyhedralemma}
For each \(n \ge 6\) there exists an infinite volume convex polyhedra in \(\H^3\) whose faces are regular \(n\)-gons with interior right angles (contained in a totally geodesic embedded hyperbolic plane), any two intersecting faces share a side and intersect at a right dihedral angle along this side, and exactly three sides meet at each vertex.
\end{lemma}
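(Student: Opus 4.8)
The plan is to realize the required polyhedron as the convex body bounded by a complete, locally convex polyhedral surface built by a developing (unfolding) procedure over an abstract combinatorial tiling. The combinatorial model is the regular tiling \(\{n,3\}\), in which \(n\)-gons meet three to a vertex; this tiles the Euclidean plane when \(n=6\) and \(\H^2\) when \(n\ge 7\), and in either case it is a simply connected, edge-to-edge, locally finite \(2\)--complex in which every edge lies on exactly two tiles and every vertex has valence three. (Note that the same scheme with \(n=5\) yields the dodecahedral tiling of the sphere, recovering the compact case treated above; the only difference is that for \(n\ge 6\) the complex is non-compact, which is what will force infinite volume.)

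First I would fix a regular right-angled \(n\)-gon \(F_0 \subset \H^3\) lying in a totally geodesic plane \(\Pi_0\); such an \(n\)-gon exists and has side length \(r_n\) by the definition of \(r_n\) in Theorem \ref{maintheorem} and the construction preceding Proposition \ref{polygons}. I then develop the tiling into \(\H^3\) by the deterministic rule: whenever a face \(F\) lying in a plane \(\Pi\) has been placed and \(E\) is one of its edges, the neighbouring face across \(E\) is the unique regular right-angled \(n\)-gon having \(E\) as an edge, lying in the plane through \(E\) perpendicular to \(\Pi\), and on the side of \(E\) away from \(F\). Along any edge-path in the dual graph this produces a chain of right-angled \(n\)-gon faces glued along shared edges at right dihedral angles.

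The key point is that this developing map is well defined, i.e.\ path independent. Since the complex is simply connected, it suffices to check that developing the three faces around a single vertex closes up. This is where the local geometry does the work: three mutually perpendicular geodesic planes through a common point of \(\H^3\) form a ``cube corner'' in which each plane meets its neighbours at dihedral angle \(\pi/2\) and each of the three faces has interior angle \(\pi/2\) at the vertex, exactly matching the corner of a right-angled \(n\)-gon. Because this configuration exists and because the rule reconstructs each face uniquely from an adjacent one, the developed faces around any vertex are isometric to this model and hence close up; moreover exactly three faces fit, with no room for a fourth. This simultaneously verifies triviality of the holonomy and the clause that three sides meet at each vertex.

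The developing map thus produces a complete polyhedral surface \(\Sigma \subset \H^3\) without boundary (every edge is shared by two faces, every vertex is trivalent, and \(\Sigma\) is locally finite), whose faces are regular right-angled \(n\)-gons meeting at right dihedral angles. At every interior face point, edge, and vertex the surface is strictly locally convex, since the dihedral angles equal \(\pi/2 < \pi\) and each vertex is an octant corner. The main obstacle --- and the step I expect to be hardest to make rigorous --- is passing from this local convexity to the global conclusion that \(\Sigma\) is embedded and bounds a convex region, i.e.\ that no two distant faces collide in \(\H^3\). I would handle this by invoking the theorem of van Heijenoort and Alexander that a complete, connected, locally convex immersed hypersurface in a Hadamard manifold (here \(\H^3\)) possessing a point of strict convexity is embedded and bounds a convex body \(P\); alternatively one can argue by hand that \(P\) equals the intersection of the half-spaces bounded by the face planes, which amounts to the same separation statement. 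Finally, since \(\{n,3\}\) has infinitely many tiles for \(n\ge 6\), the body \(P\) is unbounded, and being convex with non-empty interior it has infinite volume. This \(P\) is the desired polyhedron.
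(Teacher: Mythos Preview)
Your approach is genuinely different from the paper's and is largely sound, but it has one real gap.

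\textbf{Comparison with the paper.} The paper constructs the polyhedron explicitly and separately in the two regimes. For \(n=6\) it works in the upper half-space model: tile the boundary plane by Euclidean hexagons and place hemispheres of the right radius over their centers so that neighbouring hemispheres meet orthogonally; the truncated caps are the right-angled hexagons and the polyhedron is the region above them. For \(n\ge 7\) it starts from a totally geodesic copy \(H\) of \(\H^2\) tiled by regular \(n\)-gons three to a vertex, pushes every vertex a fixed distance \(t_0\) along the normal geodesic so that adjacent lifted vertices are at distance \(r_n\), and uses the \(n\)-fold rotational symmetry about each tile center to see that the lifted vertices over a tile lie on a totally geodesic plane. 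In both cases embeddedness and convexity are immediate from the explicit description, and one reads off directly that the body contains a horoball (for \(n=6\)) or a half-space (for \(n\ge 7\)). Your developing-map argument treats all \(n\ge 6\) uniformly and is conceptually clean, at the cost of invoking the van Heijenoort--Alexander theorem on locally convex hypersurfaces in Hadamard manifolds as a black box; the monodromy check around a trivalent vertex via the ``cube corner'' is the right local verification, and simple connectedness of \(\{n,3\}\) does the rest. One thing the paper's explicit construction buys that yours does not is immediate control of the limit set at infinity, which is used just afterwards to compute covolumes of \(G_{r_n}\).

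\textbf{The gap.} Your final sentence asserts that an unbounded convex body with non-empty interior in \(\H^3\) has infinite volume. This is false: any ideal tetrahedron is convex, has non-empty interior, is unbounded, and has finite volume. So ``infinitely many faces \(\Rightarrow\) unbounded \(\Rightarrow\) infinite volume'' does not go through in hyperbolic space. You need an extra argument here. One option is to show, from the developing picture, that \(P\) contains a metric ball of every radius (equivalently, that the distance from points of \(P\) to \(\Sigma\) is unbounded); another is to identify a horoball inside \(P\) when \(n=6\) and a half-space when \(n\ge 7\), as the paper does, or to argue that the limit set of \(P\) on \(\partial\H^3\) has non-empty interior. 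Any of these is a short addition, but as written the infinite-volume claim is unjustified.
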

\begin{proof}
We will prove the case \(r = r_6\) separately.  See figure \ref{hthreefigure} for this case.

Consider the upper half space model of hyperbolic space.  On the boundary, take a tiling by regular (Euclidean) hexagons such that the distance between the centers of neighboring hexagons is \(2\).   
At the center of each hexagon consider a Euclidean sphere of radius \(\sqrt{2}\).
The intersection of each sphere with the upper half space is a geodesically embedded copy of \(\H^2\).
Furthermore, the copies corresponding to neighboring hexagons intersect at a right angle.  The part of each half sphere which is not contained in any other is a regular hyperbolic hexagon with interior right angles and exactly three of these meet at each vertex.

We define \(C\) as the region bounded by the constructed hexagons which does not accumulate on the boundary plane in this model.   Since \(C\) contains a horoball it has infinite volume.  In this case, and only for \(n = 6\), the polyhedra \(C\) has a single limit point on the geometric boundary of \(\H^3\) (the point corresponding to \(\infty\) in the upper half space model).

Suppose now that \(r = r_n\) for some natural number \(n \ge 7\).   Since \(\frac{1}{n} + \frac{1}{3} < \frac{1}{2}\) there exists a tiling of \(\H^2\) by regular (i.e. all sides and interior angles are equal) \(n\)-gons with exactly three meeting at each vertex.

Consider a totally geodesic embedding \(H\) of \(\H^2\) into \(\H^3\).   Tile \(H\) as described above.  Let \(x,y \in H\) be neighboring vertices in the tiling and consider unit speed geodesics \(\alpha,\beta\) perpendicular to \(H\) at \(\alpha(0) = x\) and \(\beta(0) = y\) respectively.  Assume furthermore that \(\alpha(t)\) and \(\beta(t)\) are on the same side of \(H\) for all \(t\).

Let \(s_n\) be the length of the side of the hyperbolic regular \(n\)-gon with interior angles of \(2\pi/3\).   Direct calculation shows that \(s_n < r_n\).

The distance between \(\alpha(t)\) and \(\beta(t)\) has minimum \(s_n\) at \(t = 0\) and goes to infinity when \(t \to +\infty\).  Therefore, there exists \(t_0 > 0\) such that this distance is exactly \(r_n\).

For each pair of vertices as above let \(\alpha(t_0),\beta(t_0)\) be vertices of the polyhedron to be constructed, and the geodesic segment between them be a side.   The geodesics \(\alpha,\beta\) are chosen so their positive direction is always the same fixed component of the complement of \(H\) in \(\H^3\).

Notice that the vertices and sides constructed from the tiling on \(H\) are equivariant under the group of isometries of \(\H^3\) which preserve the tiling and preserve each connected component of the complement of \(H\).  

In particular, considering the order \(n\) rotation along the geodesic perpendicular to \(H\) at the center of a tile in \(H\), one sees that the vertices constructed from those of the given tile are in a totally geodesically embedded hyperbolic plane in \(\H^3\) which is perpendicular to the axis of this rotation.

Hence, we may define a face of the polyhedron by considering this plane, and we have shown that it is a hyperbolic regular \(n\)-gon with interior right angles.

Since exactly three faces meet at each vertex, and the interior angles of all faces are right angles, it follows that the dihedral angle between faces sharing an edge is also a right angle.

Define \(C\) as the component bounded by these faces which contains \(H\). Since \(C\) contains a half space it has infinite volume.
\end{proof}

Using the polyhedra of lemma \ref{polyhedralemma} the discreteness of \(G_r\) follows from the Poincaré theorem for reflexion groups \cite[Chapter 3]{delaharpe} and a simple algebraic argument.
\begin{proof}[Proof of lemma \ref{dimthreelemma}]
Fix \(n\) and let \(C\) be the polyhedron given by lemma \ref{polyhedralemma}.

We may assume that the initial orthonormal frame is placed at a vertex of \(C\) and that the unit vectors in the frame point in the direction of the incident sides meeting at this vertex.

Let \(S\) be the group generated by the set of reflexions with respect to the faces of \(C\).   
By the Poincaré Polyhedron Theorem for reflexion groups \cite[Chapter 3]{delaharpe} the group \(S\) is discrete and \(C\) is a fundamental domain of its action.

On the other hand the group \(G\) of isometries of \(\H^3\) which stabilizes \(C\) is also discrete because the distance between distinct faces of \(C\) is bounded from below.

Observe that \(gSg^{-1} = S\) for all \(g \in G\) so that the group generated by \(G\) and \(S\) coincides with \(SG\), the set of elements of the form \(sg\) for some \(s \in S\) and \(g \in G\).

We claim that the group \(SG\) is discrete.

To see this suppose that \(s_kg_k\) is a sequence of elements in this group converging to the identity.   
One has \(s_k(g_k(C)) = s_k(C)\) and since \(C\) is fundamental domain for \(S\) it follows that \(s_k\) is the identity for all \(k\) large enough.   
However since \(G\) is discrete it follows that \(g_k\) is also the identity for all \(k\) large enough.  Hence, \(SG\) is discrete as claimed.

To conclude it suffices to show that \(G_r \subset SG\).

Recall that the initial point \(p\) is a vertex of \(C\) and the starting orthonormal frame vectors \(v_1,v_2,v_3\) point in the direction of the sides containing \(p\).   
Hence if \(R\) is any of the rotations \(R_{12},R_{23},R_{31}\) one has that \(R(C)\) shares a common face with \(C\).   Hence choosing \(s \in S\) to be the symmetry along that face one has \(sR \in G\) and therefore \(R \in SG\).

Similarly, because \(v_2\) and \(v_3\) belong to totally geodesically embedded hyperbolic planes containing the side in direction \(v_1\), 
one has that \(A_r(v_2)\) and \(A_r(v_3)\) are in the direction of two the sides containing \(A_r(p)\) other than the geodesic segment \([p,A_r(p)]\).  Hence, \(A_r(C)\) shares the face containing those two directions with \(C\).   Once again taking \(s\) to be the reflexion along this face one obtains \(A_r \in SG\).

This concludes the proof that \(G_r \subset SG\) from which it follows that \(G_r\) is discrete.

We will now discuss the covolume of \(G_{r_n}\) for \(n \ge 6\).

Letting \(n = 6\) notice from lemma \ref{polyhedralemma} that there is a unique boundary point \(\xi\) which is an accumulation point of \(C\).    Considering the union \(U\) of all half-geodesics starting at a face of \(C\) and ending at \(\xi\) notice that \(U\) must contain a fundamental domain of the action of \(G_{r_6}\).  Since \(U\) has finite volume it follows that \(G_{r_6}\) has finite covolume.

Now suppose that \(n \ge 7\), we claim that the quotient of \(C\) by its stabilizer has infinite volume.  This implies that claim that \(G_{r_7}\) has infinite covolume.

To establish the claim notice that the stabilizer of \(C\) coincides with that of the tiling of of the hyperbolic plane \(H\) considered in lemma \ref{polyhedralemma}.  Since one of the half spaces delimited by \(H\) is entirely contained in \(C\) the claim follows from the fact that any Fuchsian group acting on \(H\) has infinite covolume in \(\H^3\).  This, in turn, follows from the fact that the set \(U\) of half-geodesics perpendicular to \(H\) which start in a fundamental domain of the action on \(H\) has infinite volume.
\end{proof}

\begin{figure}[H]
\includegraphics[width = \textwidth]{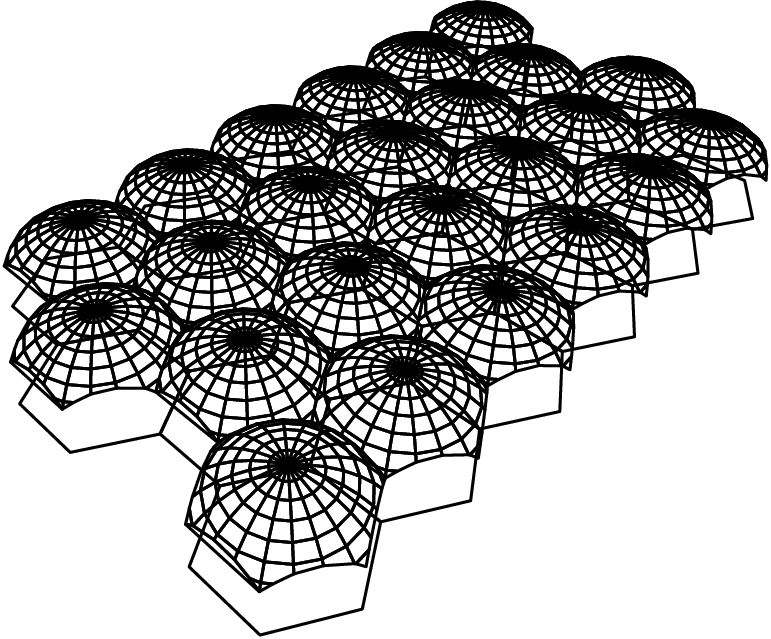}
\caption{Illustration of the proof of lemma \ref{polyhedralemma} for \(r = r_6\).  The spheres are centered at the midpoints of hexagons which tile a horizontal plane and their radii is such that two neighboring spheres intersect at a right angle.  In the upper half plane model, the surfaces obtained by truncating these spheres along planes perpendicular to the hexagonal tiling are hyperbolic right angled hexagons.\label{hthreefigure}}
\end{figure}

\section{Further discussion}

\subsection{Other primitive rotations\label{sectiongruet}}

Following \cite{gruet}, fix a natural number \(N \ge 2\), a real number \(s \in (0,1)\), and setting \(\xi = \exp(i\pi/N)\) let 
\[T_l(z) = \frac{z + s\xi^l}{s\overline{\xi}^l z + 1},\]
for \(0 \le l \le 2N - 1\).

Let \(G_{N,s}\) be the group of automorphisms of the unit disk \(\D = \lbrace z \in \C:|z| < 1\rbrace\) generated by \(T_0,T_1,\ldots, T_{2N-1}\).

Endowing \(\D\) with the hyperbolic metric each \(T_l\) is a translations of distance \(r = \log(\frac{1+s}{1-s})\).    The axis of translation for \(T_l\) and \(T_{l+1}\) intersect at \(0\) with an angle of \(\pi/N\). Hence, setting \(N = 2\), the group \(G_{2,s}\) is the same as \(H_{r}\) defined in section \ref{sectiontrees}.

In \cite[Theorem 2, part (i)]{gruet}, citing \cite[Theorem 3, part (i)]{cohen-colonna} for proof, it is claimed that if \(s \le \cos(\pi/2N)\) then the group \(G_{N,s}\) is not discrete.

Setting \(N = 2\), this would imply that \(H_r\) (which we recall is the group generated by \(A_r\) and \(RA_rR^{-1}\)) is not discrete for all 
\[r \le \log\left(\frac{1 + \frac{1}{\sqrt{2}}}{1 - \frac{1}{\sqrt{2}}}\right) = \log\left(\frac{\sqrt{2}+1}{\sqrt{2}-1}\right) = \log(3 + 2\sqrt{2}) = \acosh(3) = r_\infty,\]
contradicting the cases \(G_{r_{n}}, n \ge 5\) of theorem \ref{maintheorem}.

The mistake in the proof of \cite[Theorem 3, part (i)]{cohen-colonna} is that \cite[Lemma 2 and Lemma 3]{cohen-colonna} only show that the mapping \(\Phi\) from the \(2N\)-regular tree to \(\D\) considered by the authors is not an embedding.  But this does not entail that the group \(G_{N,s}\) is not discrete.

Despite this mistake the following question still seems natural and interesting:
\begin{question}
Let \(R\) be a rotation of even order \(2N\) fixing a point \(p \in \H^2\) and \(A_r\) a translation of distance \(r > 0\) along a geodesic containg \(p\).   For which values of \(r > 0\) is the group \(G_r\) generated by \(R\) and \(A_r\) discrete?
\end{question}

It seems that the methods used in the present article are sufficient to prove that there exists an increasing bounded sequence \(S\) such that \(G_r\) is discrete if and only if \(r \in S \cup [\sup S, +\infty)\).   However, a complete characterization of the sequence \(S\) does not follow immediately.

\subsection{Relationship to the Gilman-Maskit algorithm\label{sectiongilman}}

\begin{figure}[h]
\includegraphics[width = \textwidth]{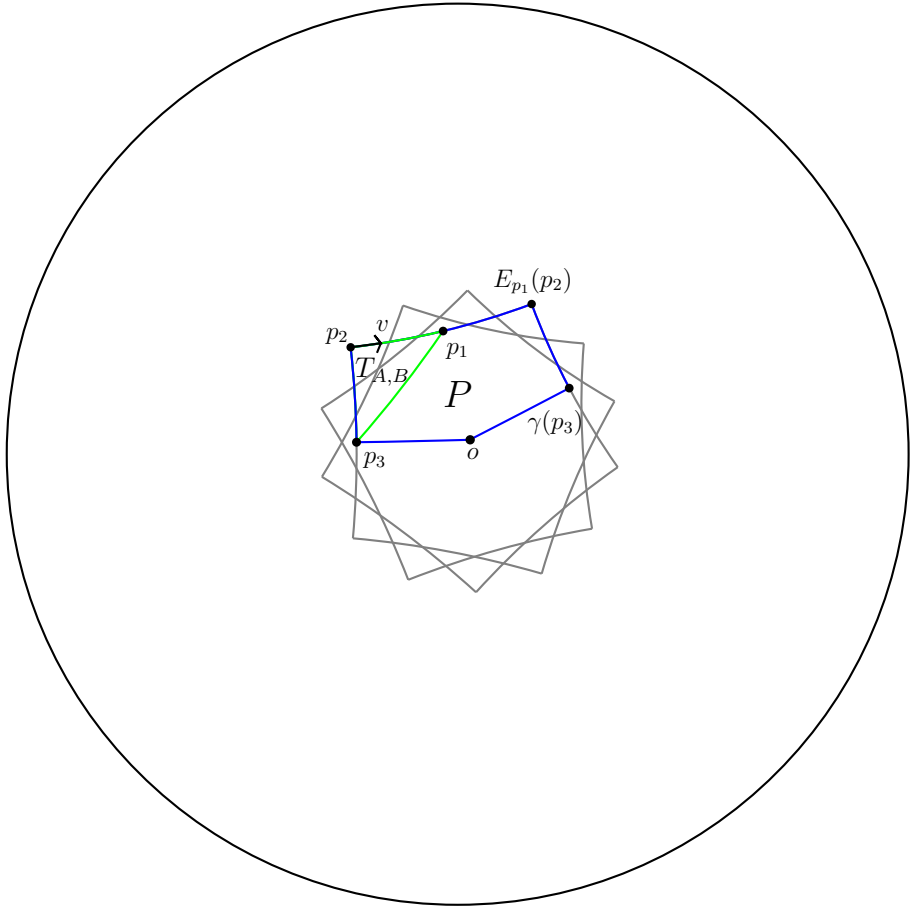}
\caption{Illustration corresponding to the Gilman-Maskit algorithm argument applied to the group \(H_{r}\) for \(r = r_{14/3}\).\label{figuregilman}}
\end{figure}

We fix in this section \(r = r_{14/3}\) and consider the group \(H_r\) generated by \(A_r\) and \(B_r\) as defined in section \ref{sectiontrees}.   By Theorem \ref{maintheorem} the group \(G_r\) is not discrete (see figure \ref{figuregilmannondiscrete}, and therefore \(H_r\) is not discrete (since it is a finite index subgroup of \(G_r\)).

The key step of our proof of non-discreteness of \(H_r\) is the application of Jørgensen's inequality to suitable elements of \(G_r\) (see Proposition \ref{nonprimitiveproposition}).  

We will now apply the arguments of \cite{gilman} (which are much more general since they cover all groups generated by two translations with intersecting axes) to the generators \(A_r\) and \(B_r\) of \(H_r\) for the sake of comparison.  To keep with Gilman's notation set \(A = A_r, B = B_r\) and \(G = H_r\).  See figure \ref{figuregilman} where the following discussion is illustrated.

We start with the Discreteness Theorem \cite[Theorem 3.1.1]{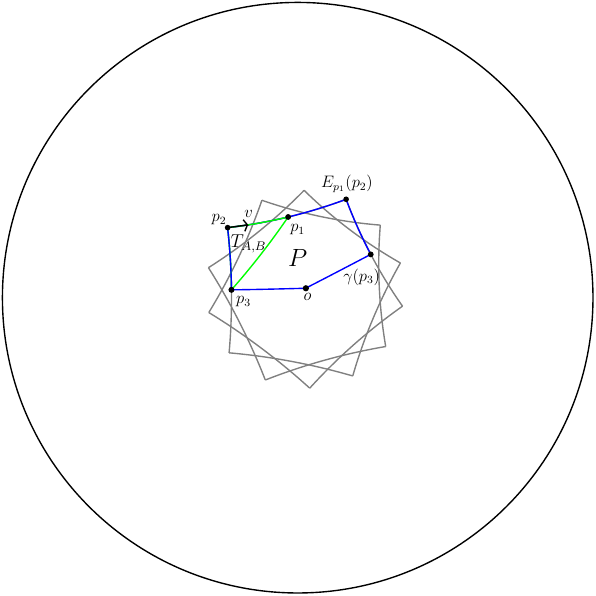}.   

In our particular case the commutator \([A,B]\) is a rotation of angle \(4 \times 2\pi \frac{3}{14}\). 
Hence, we land in case 4 of the theorem with \(\tr([A,B]) = -2\cos\left(k 2\pi/n\right)\) with \(k = 3\) and \(n = 7\).

The discussion is given in terms of an acute triangle \(\text{Act}_{A,B}\).   To calculate this triangle one begins setting \(p = p_2\) and \(p_1 = A_{r/2}(p)\) and \(p_3 = B_{r/2}(p)\) and considers the triangle \(T_{A,B}\) with vertices \(p_1,p_2,p_3\).  In our case \(T_{A,B}\) is a right isosceles triangle, and therefore the algorithm given in \cite[Section 2]{gilman} stops immediately and \(T_{A,B} = \text{Act}_{A,B}\).

This implies that, since \(k = 3\), and \(\text{Act}_{A,B}\) is a right isosceles triangle, the group is discrete according to \cite[Theorem 3.1.1]{gilman} contradicting theorem \ref{maintheorem}.

However \cite[Theorem 3.2.1]{gilman} states that if \(\text{Act}_{A,B}\) is a right isosceles triangle then one must have \(k = 2\) which is also a contradiction.

It seems that the mistake is only in the statement of the results and not the proofs. Going further into the arguments of \cite[Section 13]{gilman} one sees that the the key point of the argument is the Matelski-Beardon count stated in \cite[Theorem A.0.2]{gilman}.   

According to the Matelski-Beardon count we should consider the group \(G^*\) generated by the central symmetries (rotations of angle \(180º\)) \(E_{p_1},E_{p_2},E_{p_3}\) centered at \(p_1,p_2,p_3\) respectively.  
Let \(P\) be the pentagon with vertices \(p_3,p_2,E_{p_1}(p_2)\), \(\gamma(p_3)\), and \(o\) where \(\gamma =E_{p_1}E_{p_2}E_{p_3}\) and \(o\) is the fixed point of \(\gamma\).   

Assuming that \(G*\) is discrete let \(t\) be the quotient between the area of \(P\) and the area of \(\H^2/G^*\).   The theorem implies that if \(k = 3\) then \(t = 2\).

Hence, verifying that one cannot have \(t = 2\) yields an alternative proof of non-discreteness of  \(G\) from the one given above.

\begin{figure}
\begin{minipage}{\textwidth}
\begin{minipage}{0.32\textwidth}
\includegraphics[width=\textwidth]{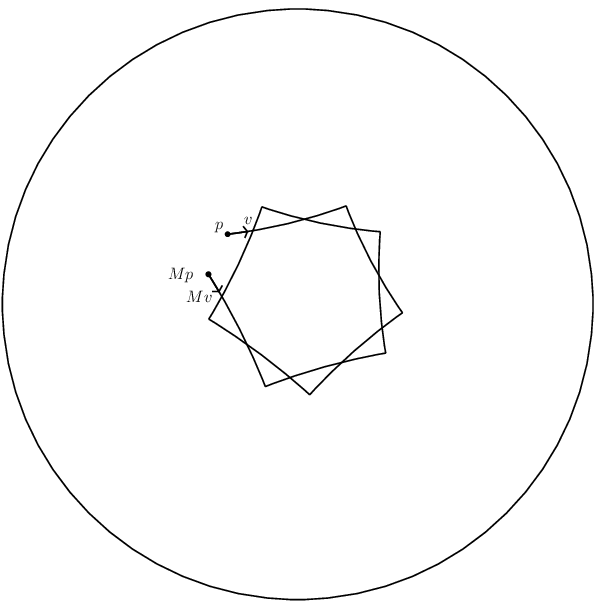}
\end{minipage}
\begin{minipage}{0.32\textwidth}
\includegraphics[width=\textwidth]{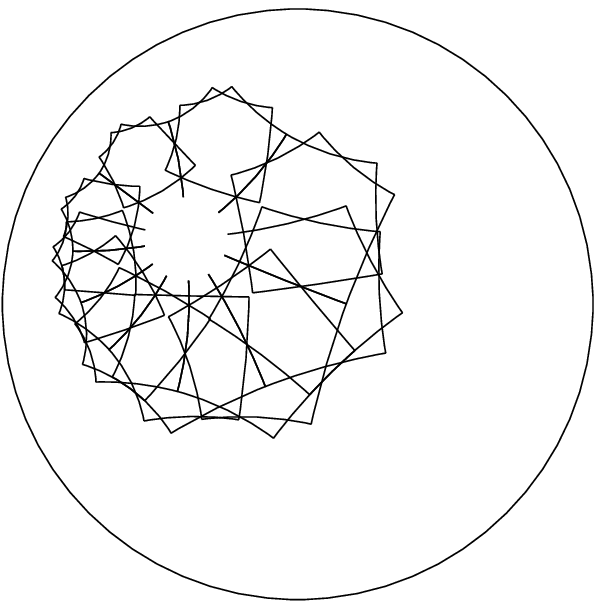}
\end{minipage}
\begin{minipage}{0.32\textwidth}
\includegraphics[width=\textwidth]{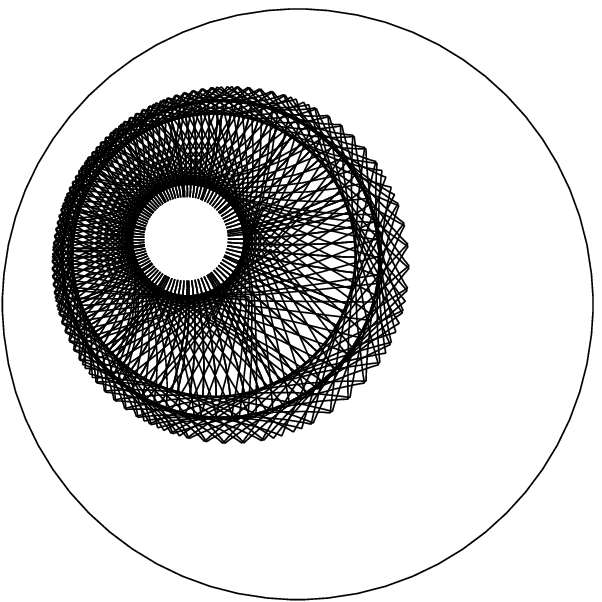}
\end{minipage}
\caption{An illustration of \(1, 10\) and \(100\) iterations of \(M = (A_rR)^9 R\) for \(r = r_{14/3}\).\label{figuregilmannondiscrete}}
\end{minipage}
\end{figure}

\end{document}